\def\S\EDcortwo{5.5}%S\ED-cor2
\def\sqr#1#2{{\vcenter{\hrule height.#2pt
        \hbox{\vrule width.#2pt height#1pt \kern#1pt
                \vrule width.#2pt}
        \hrule height.#2pt}}}
\def\q{\mathfrak q}
\def\p{\mathfrak p}
\def\P{\mathbb P}
\def\m{\mathfrak m}
\def\Min{\operatorname{Min}}
\def\p{\mathfrak p}
\def\m{\mathfrak m}
\def\n{\mathfrak n}
\def\dim{{\rm dim \, }}
\def\bs{\bigskip}
\def\G{ G}
\newtheorem{theorem}{Theorem}[section]
\newtheorem{lemma}[theorem]{Lemma}
\newtheorem{corollary}[theorem]{Corollary}
\newtheorem{proposition}[theorem]{Proposition}
\newtheorem{def&dis}[theorem]{Definition and Discussion}
\theoremstyle{definition}
\newtheorem{remark}[theorem]{Remark}
\newtheorem{conjecture}[theorem]{Conjecture}
\newtheorem{example}[theorem]{Example}
\newtheorem{problem}[theorem]{Problem}
\begin{document}

\baselineskip=15pt

\title[Multiplicity sequence and integral dependence]
{Multiplicity sequence and integral dependence}
%\date\today

\author[C. Polini, N. V. Trung, B. Ulrich, \and J. Validashti]
{Claudia Polini, Ngo Viet Trung, Bernd Ulrich, \and Javid Validashti}

\thanks{MSC 2020 {\em Mathematics Subject Classification}.
Primary 13B22, 13D40, 13A30; Secondary 14B05, 14D99}

\keywords{Hilbert-Samuel multiplicity, multiplicity sequence, $j$-multiplicity, Segre numbers, reduction, integral dependence, principle of specialization of integral
dependence}

\thanks{The first and third authors were partially supported by NSF grants DMS-1601865 and DMS-1802383, respectively.
The second author was partially supported by grant 101.04-2019.313 of the Vietnam National Foundation for Science and Technology Development.}

\address{Department of Mathematics, University of Notre Dame, Notre Dame, IN 46556, USA} \email{cpolini@nd.edu}
\address{International Centre for Research and Postgraduate Training, Institute of Mathematics, Vietnam Academy of Science and Technology, 10307 Hanoi, Vietnam}\email{nvtrung@math.ac.vn}
\address{Department of Mathematics, Purdue University, West Lafayette, IN 47907, USA}\email{bulrich@purdue.edu}
\address{Department of Mathematics, DePaul University, Chicago, IL 60604, USA}\email{jvalidas@depaul.edu}
\begin{abstract}  We prove that two arbitrary ideals $I \subset J$ in an equidimensional and universally catenary Noetherian local ring have the same integral closure if and only if they have the same multiplicity sequence. We also obtain a Principle of Specialization of Integral Dependence, which gives a condition for integral dependence in terms of the constancy of the multiplicity sequence in families.
\end{abstract}

\maketitle

%\tableofcontents
\vspace{-0.23in}

\section{Introduction}

The aim of this paper is to prove a numerical criterion for integral dependence of arbitrary ideals, which is an important topic in commutative algebra and singularity theory. 

The first numerical criterion for integral dependence was proved by Rees in 1961 \cite{R61}: Let  $I\subset J$ be two $\m$-primary ideals in an equidimensional and universally catenary Noetherian local ring $(R, \m)$. Then $I$ and $J$ have the same integral closure if and only if they have the same Hilbert-Samuel multiplicity. This multiplicity theorem plays an important role in Teissier's work on the equisingularity of families of hypersurfaces with isolated singularities, as it is used in the proof of his principle of specialization of integral dependence (PSID) \cite{T1,T2}.  For hypersurfaces with non-isolated singularities, one needs a similar numerical criterion for integral dependence of non-$\m$-primary ideals. 

In 1969, B\"oger \cite{B1} extended Rees' multiplicity theorem to the case of equimultiple ideals.  We refer to a survey of Lipman for the geometric significance of equimultiplicity \cite{Li}. Subsequently, there were further generalizations that still maintain remnants of the $\m$-primary assumption, see for instance
\cite{R2, Mc, R85, G03}. 
For a long time, it was not clear how to extend Rees' multiplicity theorem to arbitrary ideals. 
Since the Hilbert-Samuel multiplicity is no longer defined for non-$\m$-primary ideals, the need arose to use other notions of multiplicities that can 
be used to check for integral dependence. As it turns out, there are several choices, each with its own advantages and disadvantages.

One possibility is the $j$-multiplicity, which was defined by Achilles and Manaresi  \cite{AM} as the multiplicity of the $\m$-torsion of  the associated graded ring of an ideal.
Another option is the $\varepsilon$-multiplicity, which was introduced by Ulrich and Validashti \cite{UV11} (see also \cite{KV})  to control the asymptotic behavior of the $\m$-torsion 
modulo the powers of an ideal. 
In 2001, Flenner and Manaresi \cite{FM} proved that if  $I \subset J$ are arbitrary ideals in an equidimensional and universally catenary Noetherian local ring, then $I$ and $J$ have the same integral closure if and only if they have the same $j$-multiplicity at every prime ideal or, equivalently, at every prime ideal
where $I$ has maximal analytic spread. 
An analogous statement using the $\varepsilon$-multiplicity was shown in 2010 by Katz and Validashti \cite{KV}. Both criteria require 
localization, at all prime ideals or at a finite set of prime ideals that may be difficult to determine.

On the other hand, the $j$-multiplicity of an ideal $I$ appears as one of the numbers in the multiplicity sequence, 
which consists of the normalized leading coefficients of the bivariate Hilbert polynomial of a bi-graded ring associated to $I$ and $\m$ (see Section 2 for the definition). This notion was introduced by Achilles and Manaresi \cite{AM97} and has its origin in the intersection numbers of the St\"uckrad-Vogel algorithm in intersection theory \cite{FOV}.  
It follows from the work of Achilles and Manaresi that the multiplicity sequence encodes
information about the $j$-multiplicities at the prime ideals where $I$ has maximal analytic spread. 
Ciuperca \cite[2.7]{C} showed that if the ideals $I \subset J$ have the same integral closure, then they have the same multiplicity sequence. It has since been conjectured that the converse is also true like in Rees' multiplicity theorem (see e.g. \cite[11.6]{TV}):

%\smallskip 
\begin{conjecture}\label{Q} Let $I \subset J$ be arbitrary ideals in an equidimensional and universally catenary Noetherian local ring. The ideals $I$ and $J$ have the same integral closure if and only if they have the same multiplicity sequence.
\end{conjecture}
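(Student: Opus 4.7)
The forward direction is Ciuperca's theorem, so the content is the converse: assuming $I \subseteq J$ with $c_k(I) = c_k(J)$ for every $k$, the goal is to show $\overline{I} = \overline{J}$. The overall plan is to convert the multiplicity-sequence hypothesis into a family of $j$-multiplicity equalities at suitable localizations, and then invoke the Flenner--Manaresi criterion.

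The bridge from multiplicity sequences to $j$-multiplicities is an additivity formula originating with Achilles and Manaresi, which, in the generality needed here, expresses
\[ c_k(I) = \sum_{\p} j(IR_{\p})\, e(R/\p), \]
the sum running over primes $\p$ with a fixed codimension and with maximal analytic spread $\ell(IR_{\p}) = \dim R_{\p}$. My first task is to pin down this formula with the correct weights and summation set, perhaps by re-examining the Hilbert polynomial of the bi-graded ring mentioned in Section~2 and cutting successively by sufficiently general elements of $\m$ and of $I$.

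Next I would use the monotonicity $j(IR_{\p}) \geq j(JR_{\p})$, which follows from $I \subseteq J$ at each localization. Applying the additivity formula to both $I$ and $J$, the hypothesis $c_k(I) = c_k(J)$ forces term-wise equality, yielding $j(IR_{\p}) = j(JR_{\p})$ at every prime $\p$ of that fixed codimension where $I$ has maximal analytic spread.

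The Flenner--Manaresi criterion, however, demands this equality at \emph{every} prime $\p$ with $\ell(IR_{\p}) = \dim R_{\p}$, not only those of one codimension. I would bridge this gap by induction on $\dim R$: pick such a $\p$, pass to the faithfully flat extension $\widehat{R_{\p}}$ (invoking equidimensionality and universal catenarity to preserve the hypotheses), and apply the inductive hypothesis to the images of $I$ and $J$. This requires knowing that the multiplicity sequences of $IR_{\p}$ and $JR_{\p}$ coincide, and that is where the bulk of the difficulty sits: the multiplicity sequence does not localize in a transparent way. A plausible route is to compare $c_\bullet(IR_{\p})$ with the contributions of $\p$ to $c_\bullet(I)$ in $R$, either by a suitable specialization to the closed point of $R_{\p}$ or by general-element arguments on the bi-graded ring. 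Once the inductive step is secured, combining everything gives equality of $j$-multiplicities at all primes of maximal analytic spread, and Flenner--Manaresi concludes $\overline{I} = \overline{J}$. The main obstacle, therefore, is the localization compatibility of the multiplicity sequence, which must be handled with some care before the additivity-plus-monotonicity machinery can be deployed.
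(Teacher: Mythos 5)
Your bridge formula is false, and it fails at exactly the subtle point this paper is organized around. The sum $\sum_{\p} j(I R_{\p})\, e(R/\p)$ over primes $\p$ of codimension $k$ with $\ell(I R_{\p}) = \htt \p$ is what the paper calls $c_k^*(I)$ (Section 5), and in general one only has $c_k^*(I) \le c_k(I)$, with equality guaranteed only in the range $k \le \htt I$ (Lemma 5.3 and Proposition 5.4). Example 5.5 (the curve $(s^6:s^4t^2:s^3t^3:t^6)$ intersected with itself via the St\"uckrad--Vogel construction) gives $c_3^*(I)=11 < 18 = c_3(I)$. The discrepancy arises because the codimension-$k$ contributions in the Achilles--Manaresi length formula come from components of the St\"uckrad--Vogel cycle that need not be rational, i.e., need not correspond to primes of $R$ lying in $L(I)$; only the rational (distinguished) components are weighted by local $j$-multiplicities. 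Consequently you cannot extract termwise equalities $j(I_\p)=j(J_\p)$ from $c_k(I)=c_k(J)$, and the planned reduction to the Flenner--Manaresi criterion breaks at its first step. (Equality of the reduced sequences $c_i^*$ does characterize integral dependence --- Theorem 5.2, essentially Ulrich--Validashti --- but that is a genuinely different hypothesis, and the paper states explicitly that it could not deduce either criterion from the other, nor the Flenner--Manaresi criterion from its main theorem.)

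The second step you defer, localizing the multiplicity sequence so as to run an induction, is also a real gap and not a technicality: the paper proves only the one-sided inequality $c_i(I,R_\p)\le c_i(I,R)$, and even that under the extra hypothesis that $R/\p$ is analytically unramified (Proposition 2.9), so there is no mechanism to pass from $c_\bullet(I)=c_\bullet(J)$ to $c_\bullet(I_\p)=c_\bullet(J_\p)$. The paper's actual argument avoids both difficulties: it deforms to $S=R[t]_{(\m,t)}$ with $H=IS+tJS$, uses the key technical Theorem 3.3 to show that the inequalities $c_i(H,S/(t))\le c_i(H,S)$ (obtained from the hypothesis together with Proposition 2.9) force $t\notin \p$ for every $\p\in L(H)$, so that $H_\p=(JS)_\p$ at all such primes, and then invokes the local criterion that integral dependence over $H$ need only be checked at primes of $L(H)$ (Theorem 4.1), finally reducing modulo $t$. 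If you want to salvage your outline, you would have to either work with $c_i^*$ from the start or supply a substitute for the false identity $c_k = c_k^*$; as it stands the proposal does not prove the conjecture.
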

%\smallskip 

This conjecture was inspired by
the work of Gaffney and Gassler on hypersurface singularities in 1999 \cite{GG}. 
For every reduced closed analytic subspace $(X,0) \subset ({\mathbb C}^n,0)$ of pure dimension and every ideal $I \subset {\mathcal O}_{X,0}$, they defined a set of invariants called Segre numbers, which arise from the intersection of the exceptional divisor on the blowup of $I$  with generic hyperplanes. If $I$ is the Jacobian ideal of a hypersurface singularity, the Segre numbers are just the L\^e numbers introduced by Massey in order to study equisingularity conditions \cite{M1,M2}. Later, Achilles and Rams \cite{AR} showed that the Segre numbers are a special case of the multiplicity sequence. Inspired by Teissier's work, Gaffney and Gassler \cite{GG} proved a principle of specialization of integral dependence (PSID) based on Segre numbers. The PSID says, essentially, that two ideal sheaves $\mathcal I \subset \mathcal J$ defined on the total space of a family have the same integral closure if they do so on the generic fiber and if suitable numerical invariants of $\mathcal I$ are constant across the fibers of the family. As a consequence, two ideals  $I \subset J$ of  ${\mathcal O}_{X,0}$ have the same integral closure if and only if they have the same Segre numbers. Therefore, Conjecture \ref{Q} has an affirmative answer in the analytic case. It has been a great challenge to extend the results of Gaffney and Gassler to arbitrary local rings.

In this paper we prove that two arbitrary ideals $I \subset J$ in an equidimensional and universally catenary Noetherian local ring have the same integral closure if and only if they have the same multiplicity sequence, thereby solving Conjecture ~\ref{Q} in full generality. 
The basic idea is to test integral dependence locally at the prime ideals where the ideal $I$ has maximal analytic spread. We first prove a key technical result that characterizes parameters
that belong to none of these prime ideals
in terms of the multiplicity sequence. From this we deduce both the affirmative answer to Conjecture~\ref{Q} and the PSID based on the multiplicity sequence. 
The multiplicity sequence, as opposed to the $j$-multiplicity or the $\varepsilon$-multiplicity, avoids the need to consider localizations and, most notably, it is easily computable 
like the $j$-multiplicity, using the intersection algorithm, and it behaves well in families like the $\varepsilon$-multiplicity, as it satisfies a PSID.

We could not deduce the aforementioned criterion of Flenner and Manaresi from our results. 
On the other hand, we can strengthen their criterion by showing that two arbitrary ideals $I \subset J$ in an equidimensional and universally catenary Noetherian local ring have the same integral closure if and only if they have the same refined multiplicity sequence (see Section 5 for the definition). The refined multiplicity sequence accounts for the contribution of the local $j$-multiplicities in the original multiplicity sequence.
In  St\"uckrad's and Vogel's approach to intersection theory, the refined multiplicity sequence gives the degree of the part of the intersection cycle that is supported at
the {\it rational} components of a fixed dimension; these components are the distinguished varieties in Fulton's intersection theory \cite{Ga}.

\smallskip

\section{Preliminaries}

In this section we recall some definitions and establish basic properties, mainly of the multiplicity sequence, that will be used throughout. 

General elements are instrumental in the study of multiplicities. To review the definition, 
let $R$ be a Noetherian local ring with an infinite residue field $k$ and let $I$ be an ideal of $R$ 
generated by $a_1, \ldots, a_n$. We say that $\, x_1, \ldots , x_s$ are {\it general elements} of $I$, if
$x_i=\sum_{j=1}^n \lambda_{ij} \, a_j$ for  $\lambda_{ij} \in R$ and  the image 
of $(\lambda_{ij})\in R^{sn}$ in $k^{sn}$ belongs to a given dense open subset of $k^{sn}$.

Now let $R$ be a Noetherian ring and $I$ an ideal. The {\it Rees ring} of $I$ is defined as the standard graded subalgebra ${\mathcal R}(I):=R[It] \cong
\oplus_{v\ge 0} \, I^v$ of the polynomial ring $R[t]$, the {\it associated graded ring} is $G_{I}(R) :={\mathcal R}(I)\otimes_RR/I \cong \oplus_{v\ge 0} \, I^v/I^{v+1}$,
and, if $(R,\m,k)$ is local, the {\it special fiber ring} is $F(I): ={\mathcal R}(I)\otimes_Rk \cong \oplus_{v\ge 0} \, I^v/\m I^{v}$.

Let $J$ be an ideal containing $I$. One says that $J$ is {\it integral} over $I$, or $I$ is a {\it reduction} of $J$, if 
the inclusion ${\mathcal R}(I) \subset {\mathcal R}(J)$ is an integral extension of rings, equivalently, if $J^{n+1}=IJ^n$ for
$n \gg 0$, or yet equivalently, if every element $x\in J$ satisfies an equation of the form
$$x^{n}+a_{1}x^{n-1}+ \ \ldots \ +a_{n-1}x+a_{n}=0 $$
with $a_{i}\in I^{i}$ for $1 \leq i \leq n$. Of particular importance for us is the fact, which is obvious from the second characterization of integral
dependence, that if $I$ is zero-dimensional and a reduction of $J$, then the equality of Hilbert-Samuel multiplicities $e(I,R) = e(J,R)$
obtains.

If $(R, \m, k)$ is a Noetherian local ring of dimension $d$, then the {\it analytic spread} of an ideal $I$ is defined as $\ell(I):=\dim \, F(I)$.
The analytic spread of a proper ideal satisfies the inequality ${\rm ht} \, I \le \ell(I) \le d $, in particular, $\ell(I)=d$ if $I$ is $\m$-primary. Moreover, 
$\ell(I)=0$ if and only if $I$ is nilpotent.
Every ideal $I$ has a {\it minimal reduction}, a reduction 
minimal with respect to inclusion. If $k$ is infinite, then all minimal reductions of $I$ have the same minimal number 
of generators, namely $\ell(I)$; this follows from the fact that a sequence of elements in $I$ minimally generates a minimal reduction of $I$ if 
and only if its image in $I/\m I$ forms a system of parameters of $F(I)$, a ring of dimension $\ell(I)$.  Thus one also sees that
$\ell(I)$ general elements of $I$ generate a minimal reduction of $I$ and, in particular, that $d$ general elements of $I$ generate a reduction. 
The notion of minimal reduction and its relationship to multiplicities as well as analytic spread is due to Northcott and Rees;  we refer 
to \cite{NR} or \cite{SH} for more details.
% and to the discussion preceding Proposition \ref{Lformula} for an explanation of general elements.

Again, let $(R,\m, k)$ be a Noetherian local ring of dimension $d$ and $I$ an ideal. Consider the doubly associated graded ring 
$$\G := G_{\m}(G_{I}(R)) = \bigoplus_{u \ge 0} \,  \m^uG_I(R)/\m^{u+1} G_I(R) \, .$$
This is a Noetherian standard bigraded $k$-algebra 
with bigraded components
$$\G_{uv}=\frac{\m^uI^v+I^{v+1}}{\m^{u+1}I^v+I^{v+1}}\, .$$
%which are modules of finite length.
Let 
$$h(r,s)=\sum_{u=0}^{r} \sum_{v=0}^{s} \lambda(\G_{uv})\, ,$$
where $\lambda(\cdot)$ denotes length. 
It is well-known that for $r$ and $s$ sufficiently large, $h(r,s)$ is a polynomial function of degree at most $d$ (equal $d$ if $I\neq R$) of the form 
$$ \sum_{i=0}^d \,  \frac{c_i (\G)}{(d-i)! \, i!} \, r^{d-i}s^{i}  +\mbox{  terms of lower degree , }
$$
where $c_i(\G)$ are nonnegative integers. 

The {\it multiplicity sequence} of the ideal $I$ is defined by Achilles and Manaresi \cite{AM97} as the sequence 
$$
c_i(I)=c_i(I,R):=c_i(\G)  \qquad  {\rm for} \  \ 0 \leq i \leq d\, .
$$
We set $c_i(I):=0$ for $i<0$ and $i>d$. 
The reader should be warned that our definition is slightly different from the one of Achilles and Manaresi in the sense that we index the sequence using codimension rather than dimension. 

From \cite[2.3(i)]{AM97} or Proposition \ref{Lformula} below we know that
\begin{equation*}\label{vanishing}c_i(I)=0 \ \ \ \ \ \  \hbox{if} \ \ i< d - \dim \, R/I  \ \ \hbox{or} \ \ i >\ell(I) \, .\end{equation*}
If $I$ is an $\m$-primary ideal, then $c_i(I) = 0$ for $i < d$ and $c_d(I) = e(I,R)$, the Hilbert-Samuel multiplicity of $I$ \cite[2.4(i)]{AM97}.  
 
For $c_0(I)$ one has the formula 
\begin{equation}\label{0}c_0(I)=\displaystyle 
\sum_{\p\in V(I), \ \dim R/\p = d}  \lambda \left(R_{\p}\right) \cdot e(R/\p) \, , \end{equation}
see \cite[2.3(iii)]{AM97} or Proposition \ref{Lformula}.
For $i\ge 1$ one can compute $c_i(I)$ using general elements. The next proposition gives the relevant formula, which
was proved by Achilles and Manaresi \cite[4.1]{AM97}. 
This formula has its origin in the St\"uckrad--Vogel algorithm in intersection theory \cite{FOV}. 

\begin{proposition}[\bf Length Formula for Segre Numbers]\label{Lformula}  
Let $R $ be a Noetherian local ring  of dimension $d$ with infinite residue field and $I$ an ideal.  If $i\ge 0$ and $x_1, \ldots, x_i$ are general elements of $I$, then  \begin{equation*} 
c_i(I)=\sum_{\substack{\p\in V(I), \ \dim R/\p = d-i\\
\p \supset (x_1, \ldots, \, x_{i-1}):\, I^{\infty} }
}  \lambda \left(\frac{R_{\p}}{ (x_1, \ldots,  x_{i-1})R_{\p}: I^{\infty}R_{\p} +x_iR_{\p}}\right) \cdot e(R/\p)\, ,
\end{equation*}
where we use the convention that $(x_1, \ldots, \, x_{i-1}):\, I^{\infty}$ is $\, 0 \, $ for $i=0$ and is $\, 0:I^{\infty}$ for $i=1$. 
\end{proposition}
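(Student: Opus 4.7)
The strategy is induction on $i$. The base case $i=0$ is the formula
$$c_0(I) = \sum_{\p \in V(I),\ \dim R/\p = d} \lambda(R_\p)\cdot e(R/\p)$$
displayed just before the statement, itself an associativity formula for the top-dimensional components of $\Spec R$ meeting $V(I)$. For the inductive step, I would choose a general element $x_1 \in I$ and consider its image $\bar x_1 \in \G_{01}$. The key technical identity to establish is a degree-reduction formula of the shape
$$c_i(I,R) = c_{i-1}(I\bar R,\, \bar R), \qquad \bar R := R / \bigl((0:_R I^\infty) + x_1 R\bigr),$$
where the colon by $I^\infty$ discards the locus where $I$ is nilpotent (which contributes to $c_0$ but not to $c_i$ for $i \geq 1$), and the quotient by $x_1$ reduces the exponent of $s$ in the leading coefficient of $h(r,s)$ by one. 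A normalization check matches the $r^{d-i}s^i$ coefficient in the Hilbert polynomial of $\G$ with the $r^{(d-1)-(i-1)} s^{i-1}$ coefficient in the Hilbert polynomial of the doubly associated graded ring attached to $(I\bar R, \bar R)$.

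With this identity in hand, induction applied to $(I\bar R, \bar R)$ with $i-1$ general elements yields the length formula in $\bar R$. A Bertini-type dimension count shows that general elements of $I$ descend to general elements of $I\bar R$, so one may take $\bar x_2, \ldots, \bar x_i$ to be the images of general $x_2, \ldots, x_i \in I$. Primes $\bar\p \in V(I\bar R)$ of dimension $d-i$ correspond bijectively to primes $\p \in V(I)$ of $R$ of dimension $d-i$ that contain $(x_1,\ldots,x_{i-1}) :_R I^\infty$, and the local length at $\bar\p$ translates into
$$\lambda\Bigl(R_\p \,/\, \bigl((x_1,\ldots,x_{i-1})R_\p :_{R_\p} I^\infty R_\p + x_i R_\p\bigr)\Bigr).$$
Summing over such primes yields the asserted formula, with the factor $e(R/\p)$ picked up from the standard associativity formula.

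The main obstacle is the degree-reduction identity. One must verify that a general choice of $x_1$ makes $\bar x_1 \in \G_{01}$ a bigraded superficial element, so that quotienting by $x_1\G$ changes the bivariate Hilbert polynomial of $\G$ \emph{exactly} by shifting and reducing the $s$-exponent, with no spurious contributions from $\m$-power torsion or from lower-dimensional components. The combination of the colon by $I^\infty$ and the quotient by $x_1$ must also interact cleanly to produce the doubly associated graded ring of $(I\bar R, \bar R)$, and one must check that each step preserves the generality condition on the $x_j$'s. These checks are Bertini-type dimension counts in spirit, but bookkeeping the bigraded structure throughout is where the technical weight lies.
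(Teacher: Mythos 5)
Your plan is, in outline, a re-proof of Achilles--Manaresi's theorem, and that is precisely where the gap lies: the degree-reduction identity $c_i(I,R)=c_{i-1}(I\bar{R},\bar{R})$ with $\bar{R}=R/((0:I^{\infty})+x_1R)$ is asserted, not proved, and it \emph{is} the whole content of the statement. The paper itself does not prove it either; its proof of Proposition~\ref{Lformula} is a two-line reduction to \cite[4.1]{AM97}: Achilles and Manaresi established the length formula for sequences $x_1,\ldots,x_i$ whose images in $I/\m I$ avoid finitely many proper subspaces of $I/\m I$, and general elements in the sense of Section~2 automatically satisfy that condition (after discarding the trivial cases $I=R$ and $i>d$). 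So if you intend a self-contained argument, everything you defer to ``a normalization check'' and ``Bertini-type dimension counts'' is exactly the technical body of \cite{AM97}: you must show that for general $x_1$ the initial form of $x_1$, of bidegree $(0,1)$, is filter-regular (superficial) in $\G$, that $\G$ modulo this element agrees with the doubly associated graded ring of $(I\bar{R},\bar{R})$ up to modules of dimension $<d$ which do not affect the top-degree coefficients of the bivariate Hilbert polynomial, and that the passage from $R$ to $\bar{R}$ drops dimension by exactly one. None of this can be borrowed from Corollary~\ref{COR}: parts (a)--(c) there are deduced \emph{from} the Length Formula, so invoking them would be circular.

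Two concrete places where the sketch would need real work beyond the missing identity. First, the coefficient matching between the $r^{d-i}s^{i}$ term for $\G$ and the $r^{(d-1)-(i-1)}s^{i-1}$ term downstairs presupposes $\dim\bar{R}=d-1$; when $\htt\, I=0$, or when $\dim R/(0:I^{\infty})<d$ (e.g.\ $I$ nilpotent along all top-dimensional components), this either fails or both sides degenerate, and these cases have to be isolated and handled separately -- the hypotheses of the proposition do not exclude them. Second, the base case $c_0(I)=\sum\lambda(R_\p)\,e(R/\p)$ is not the usual associativity formula (only primes of $V(I)$ of maximal dimension occur), so in a from-scratch induction it too requires a computation with the bigraded Hilbert function rather than a citation of the display preceding the statement, which the paper in turn justifies by \cite[2.3(iii)]{AM97}. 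In short: either supply the superficial-element argument in the bigraded ring, or do as the paper does and quote \cite[4.1]{AM97}, adding only the observation that general elements avoid the finitely many proper subspaces of $I/\m I$ required there.
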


Notice that the case $i=0$ is Formula (\ref{0}).

\begin{proof} 
Write $\m$ for the maximal ideal of $R$. We may assume that $I\neq R$ and that $i\le d$ since otherwise both sides of the equation are zero. Achilles and Manaresi proved the above formula for a sequence $x_1, \ldots, x_i$ such that the images of $x_1, \ldots, x_i$  in $I/ \m I$ avoid a finite number of proper subspaces of $I/ \m I$. 
This implies that the formula holds for general elements $x_1, \ldots, x_i$  of $I$. 
\end{proof}

The above length formula can be used to derive the following properties of the multiplicity sequence.  

\begin{corollary}\label{COR} 
Let $(R,\m) $ be a Noetherian local ring of dimension $d$ and $I$ an ideal.  
\begin{enumerate}[$(a)$]
\item\label{modulo} If $H$  is an ideal contained in $0:I^{\infty}$ and $\, \dim \, R/H= d$, then for $i\ge 1$ 
$$c_i(I,R)=c_i(I, R/H)\, ;$$
\item\label{grade}  If $k$ is infinite, ${\rm grade}\ I \ge 1$, and $x$ is a general element of $I$, then 
$$c_1(I,R) = c_0(I,R/(x)) \, ;$$
\item\label{section} If $k$ is infinite, ${\rm ht}\ I \ge 1$, and $x$ is a general element of $I$, then for $i\ge 2$
$$c_i(I,R)=c_{i-1}(I, R/(x))\, ;$$
\item\label{variable} If $S = R[y]_{(\m,y)}$, where $y$ is an indeterminate, then $\, c_0((I,y),S) =0 \,$ and for $i \ge 1$
$$c_i((I,y),S) = c_{i-1}(I,R).$$
\end{enumerate}
\end{corollary}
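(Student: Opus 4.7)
The unifying tool for all four parts will be the Length Formula for Segre Numbers (Proposition~\ref{Lformula}): I would expand both sides of each asserted equality and match terms.

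For (a), for $i \geq 1$ every prime $\p$ appearing in the length formula for $c_i(I, R)$ satisfies $\p \supset (x_1, \ldots, x_{i-1}) : I^\infty \supset 0:I^\infty \supset H$. I would observe that the primes contributing to $c_i(I, R)$ are therefore exactly the pullbacks of the primes of $R/H$ contributing to $c_i(I, R/H)$ (using $\dim R/H = d$). Since $H_\p \subset (x_1, \ldots, x_{i-1}) R_\p : I^\infty R_\p$, passing from $R_\p$ to $R_\p/H_\p$ does not change the quotient that appears in the length formula, so the two sums coincide term by term.

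For (b) and (c), I would exploit that a general $x \in I$ satisfies $\dim R/(x) = d - 1$ (using $\grade I \geq 1$ or $\htt I \geq 1$ to avoid minimal primes), and in case (b) the element $x$ is a nonzerodivisor so that $0 : I^\infty = 0$. Applying the length formula for $c_i(I, R)$ with the sequence $x, x_2, \ldots, x_i$, where $x_2, \ldots, x_i$ are further general elements of $I$, primes $\p$ not containing $x$ give zero contribution (the local quotient collapses, as $x$ becomes a unit there), so the sum is effectively indexed by primes containing $x$, equivalently by primes $\bar{\p}$ of $R/(x)$. A reduction-modulo-$(x)$ computation identifies $(x, x_2, \ldots, x_{i-1}) R_\p : I^\infty R_\p$ modulo $x R_\p$ with $(x_2, \ldots, x_{i-1})(R/(x))_{\bar \p} : I^\infty (R/(x))_{\bar \p}$, and the multiplicities $e(R/\p)$ agree with $e((R/(x))/\bar \p)$. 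This would give (b) for $i = 1$ and (c) for $i \geq 2$.

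For (d), the vanishing $c_0((I, y), S) = 0$ follows because every prime $\q$ of $S$ of maximal dimension $d + 1 = \dim S$ is minimal, hence of the form $\p_0 R[y]_{(\m, y)}$ with $\p_0 \in \operatorname{Min} R$ of dimension $d$; such $\q$ does not contain $y$. For $i \geq 1$, I would apply (b) (if $i = 1$) or (c) (if $i \geq 2$) to the ideal $(I, y)$ in $S$, using the specific general element $w = y + u$ with $u$ general in $I$; this is admissible since $y$ is a nonzerodivisor in $S$, so $\htt(I, y) \geq 1$. To verify that $w$ really is a general element of $(I, y)$, I would note $(I, y)/\m_S (I, y) \cong (I/\m I) \oplus k \bar y$; the image of $w$ is $(\bar u, 1)$, and no proper linear subspace of this vector space can contain the affine hyperplane $\{y\text{-coordinate} = 1\}$, so the dense open set of good coefficients guaranteed by the Length Formula meets this hyperplane in a dense open subset. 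Finally, $S/(y + u) \cong R$ via $y \mapsto -u$ carries $(I, y)$ to $(I, -u) = I$ (since $u \in I$), yielding $c_{i-1}((I, y), S/(y + u)) = c_{i-1}(I, R)$. The hardest part will be this genericity check for $w = y + u$ in (d), together with the colon-ideal bookkeeping needed to ensure the length formulas match term by term in (a)--(c).
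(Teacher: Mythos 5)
Your treatment of (a)--(c) is essentially the paper's own proof: everything is reduced to the Length Formula of Proposition~\ref{Lformula}, via the colon identity $(x_1,\ldots,x_{i-1},H):I^{\infty}=(x_1,\ldots,x_{i-1}):I^{\infty}$ for (a) and the observations $0:I^{\infty}=0$ and $\dim R/(x)=d-1$ for (b) and (c); the only step you omit is the reduction to an infinite residue field in (a), where no such hypothesis is assumed (the paper first passes to $R(z)=R[z]_{\m[z]}$ before invoking the Length Formula). For (d) you take a genuinely different route. The paper argues directly from the definition: $G_{(\m,y)}(G_{(I,y)}(S))\cong G_{\m}(G_I(R))[y^{\star}]$ with $y^{\star}$ of bidegree $(0,1)$, and comparing bigraded Hilbert functions gives both assertions at once, with no genericity considerations and no hypothesis on $k$. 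You instead specialize via $w=y+u$ and the isomorphism $S/(w)\cong R$, then invoke (b)/(c). This can be made to work, but two caveats are needed. First, $w$ is not literally a general element of $(I,y)$ in the sense of the definition: its $y$-coefficient is pinned to $1$, and an arbitrary dense open set of coefficient vectors --- which is all that the statements of (b), (c), and Proposition~\ref{Lformula} provide --- may be disjoint from the hyperplane $\{\lambda_y=1\}$ (take the complement of $V(\lambda_y-1)$). Your linear-subspace argument is the correct fix, but it rests on the finer genericity recorded in the paper's proof of Proposition~\ref{Lformula} (the formula holds for sequences whose images in $(I,y)/\m_S(I,y)$ avoid finitely many proper subspaces), so you are really re-running the proofs of (b)/(c) under that weaker hypothesis rather than applying them as black boxes. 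Second, since (d) assumes nothing about $k$, you also need the $R(z)$ reduction here, together with the (routine) check that it is compatible with forming $S$. The trade-off is clear: the paper's bigraded computation is shorter and hypothesis-free, while your specialization argument is more geometric but costs these extra verifications; your derivation of $c_0((I,y),S)=0$ from the minimal primes of $S$ is fine as stated.
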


\begin{proof} 
To prove item (\ref{modulo}) let $z$ be an indeterminate over $R$. Replacing $R$ by $R(z):=R[z]_{\m[z]}$
and $I$ by $IR(z)$ does not change $c_i(I,R)$ or $c_i(I, R/H)$. Thus we may assume that $k$ is infinite. We
use the notation of Proposition~\ref{Lformula}.
Notice that for $i \geq 1$,
$(x_1, \ldots, x_{i-1},H):I^{\infty}=(x_1, \ldots, x_{i-1}):I^{\infty}$ because $$(x_1, \ldots, x_{i-1},H):I^{\infty} \subset (x_1, \ldots, x_{i-1}, 0:I^{\infty}):I^{\infty} 
= (x_1, \ldots, x_{i-1}):I^{\infty}.$$ Now the assertion is a
direct consequence of the Length Formula of Proposition~\ref{Lformula}.  

For the proof of item (\ref{grade}) we notice that $I$ is not contained in any associated prime ideal of $R$ since ${\rm grade}\ I \ge 1$. Therefore, $0: I^\infty = 0$. 
Moreover, ${\rm dim }(R/xR) = d-1$ because $x$ is not contained in any associated prime ideal of $R$. 
Applying Proposition \ref{Lformula} with $i=1$ and $x_1 = x$ and with $i=0$, respectively,  we obtain
$$c_1(I,R) = \sum_{\p \in V(I),\ \dim R/\p = d-1}\lambda(R_\p/xR_\p) \cdot e(R/\p) = c_0(I,R/xR).$$

Item (\ref{section})  follows from Proposition~\ref{Lformula}, with $x_1=x$.  Indeed, 
${\rm dim} (R/xR) = {\rm dim} \, R-1$ because $I$ is not contained in any minimal prime ideal of $R$.

Item (\ref{variable}) follows, most directly, from the definition of the 
multiplicity sequence. Indeed,
$$G_{(\m,y)}(G_{(I,y)}(S))=G_{\m}(G_{(I,y)}(S))=G_{\m}(G_{I}(R)[y^{\star}])=G_{\m}(G_{I}(R))[y^{\star}] 
\, ,$$
where $y^{\star}$ is a variable of degree $(0,1)$.
Now, comparing the bigraded Hilbert functions of $G_{\m}(G_{I}(R))[y^{\star}]$ 
and $G_{\m}(G_{I}(R))$
yields the result.
\end{proof}

A Noetherian ring is called {\em equidimensional} if every minimal prime has the same dimension and {\em catenary} if any two maximal strictly increasing chains of prime ideals between two given prime ideals $\p_1 \subset \p_2$ have the same length. 

\begin{remark} \label{htt}
If $R$ is an equidimensional and catenary Noetherian local ring and $\mathfrak a$ is an ideal, then $\dim R - \dim R/{\mathfrak a} = {\rm ht} \, \mathfrak{a}.$
Therefore, we may replace the condition $\, \dim \, R/\p = d-i \, $ by $\, {\rm ht} \, \/ \p = i \,$ in Proposition~\ref{Lformula} (and Formula (1)), and obtain
\begin{equation*}
c_i(I,R)=\sum_{\substack{\p\in V(I), \ {\rm ht}\, \p = i\\
\p \supset (x_1, \ldots, \, x_{i-1}):\, I^{\infty}}
}  \lambda \left(\frac{R_{\p}}{ (x_1, \ldots, x_{i-1})R_{\p}:I^{\infty}R_{\p} +x_iR_{\p}}\right) \cdot e(R/\p) \quad \ \mbox{ for  } \ i \geq 0\, .
\end{equation*}
\end{remark}

\medskip

Multiplicity based criteria usually require the ambient local ring $R$ to be equidimensional and {\em universally catenary}, meaning that all finitely generated $R$-algebras are
catenary. This assumption is used for instance in Rees' multiplicity theorem for zero-dimensional ideals $I \subset J$, which says that $J$ is integral over $I$ if (and only
if) $e(I,R)=e(J,R)$ \cite{R61}.  Owing to Ratliff (see e.g. \cite[31.6 and 31.7]{Mat}), a Noetherian local ring $R$ is equidimensional and universally catenary if and only if 
$R$ is formally equidimensional or quasi-unmixed, meaning the completion $\widehat{R}$ is equidimensional. 
We now collect additional properties of the multiplicity sequence in this slightly more restrictive setting.

\vspace{0.05in}

\begin{proposition}\label{COR3} 
Let $R $ be an equidimensional and  catenary Noetherian local ring  
and $I$ an ideal. \begin{enumerate}[$( a )$]
\item\label{g} If $\, i \le{\rm ht} \ I$, 
then  
\begin{equation*} 
c_i(I)=\sum_{\p\in V(I),\
 { \rm ht}\, \p = i}
 e(I,R_{\p}) \cdot e(R/\p)\,;
\end{equation*}
 \item\label{ht}  ${\rm ht} \ I= {\rm min}\{i \, | \, c_i(I)\not=0\} ;$ 
\item\label{ell} If $R$ is universally catenary and $I \neq R$, then $\ell(I)={\rm max}\{i \, | \, c_i(I)\not=0\} .$ 
\end{enumerate}
\end{proposition}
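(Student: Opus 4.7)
The plan is to derive (a) directly from the Length Formula (Proposition \ref{Lformula} in the form of Remark \ref{htt}), to deduce (b) as an immediate consequence, and to prove (c) by induction on $\ell(I)$ using Corollary \ref{COR}. For (a), when $i\le\htt I$, any $\p\in V(I)$ of height $i$ is necessarily a minimal prime of $I$, so $IR_{\p}$ has radical $\p R_{\p}$ in the $i$-dimensional local ring $R_{\p}$. General elements $x_1,\ldots,x_i\in I$ remain general in $IR_{\p}$, so applying Proposition \ref{Lformula} to $R_{\p}$ in its top degree identifies the local length $\lambda(R_{\p}/((x_1,\ldots,x_{i-1})R_{\p}:I^{\infty}+x_iR_{\p}))$ with $c_i(IR_{\p},R_{\p})=e(I,R_{\p})$, using the identification $c_d=e$ in the $\m$-primary case already recorded in the text. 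Summing over $\p$ via Remark \ref{htt} then yields the asserted formula. Part (b) follows immediately: the already-noted vanishing $c_i(I)=0$ for $i<d-\dim R/I=\htt I$ (equidimensionality and catenarity giving the equality) together with (a) shows $c_i(I)=0$ for $i<\htt I$ while $c_{\htt I}(I)$ is a sum of strictly positive terms.

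For (c), since $c_i(I)=0$ for $i>\ell(I)$ has been noted, it suffices to show $c_{\ell(I)}(I)\neq 0$; I proceed by induction on $\ell:=\ell(I)$. The base case $\ell=0$ forces $I$ to be nilpotent, so every minimal prime of $R$ lies in $V(I)$ with dimension $d$ by equidimensionality, and formula (\ref{0}) gives $c_0(I)>0$. For $\ell\ge 1$, I first reduce to $\grade I\ge 1$ by passing to $R/H$ with $H=0:I^{\infty}$: since $\ell(I)\ge 1$ forces $I$ to be non-nilpotent, some minimal prime $\q$ of $R$ avoids $I$; localizing at $\q$ gives $H\subset\q$, so $\dim R/H=d$. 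The quotient $R/H$ inherits equidimensionality and universal catenarity, and since $HI^k=0$ for large $k$, a short Artin-Rees argument shows that any reduction of $I(R/H)$ lifts to a reduction of $I$, giving $\ell(I(R/H))=\ell$. Corollary \ref{COR}(\ref{modulo}) then preserves $c_i$ for $i\ge 1$, so one may assume $\grade I\ge 1$ from the outset.

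Now take a general element $x\in I$; since $\grade I\ge 1$, $x$ is a nonzerodivisor and $R/(x)$ remains equidimensional, universally catenary, of dimension $d-1$. If $\ell=1$, Corollary \ref{COR}(\ref{grade}) gives $c_1(I,R)=c_0(I,R/(x))$, and $(x)$ being a minimal reduction of $I$ makes $I(R/(x))$ nilpotent, so the base case in $R/(x)$ yields $c_0(I,R/(x))>0$. If $\ell\ge 2$, Corollary \ref{COR}(\ref{section}) gives $c_{\ell}(I,R)=c_{\ell-1}(I,R/(x))$, and the crucial point is the identity $\ell(I(R/(x)))=\ell-1$: the inequality $\le\ell-1$ follows by taking $x$ as the first member of a minimal reduction of $I$, while $\ge\ell-1$ uses that a general $x$ is superficial for $I$ (so $I^{n+1}:x=I^n$ for large $n$), which allows any reduction of $I(R/(x))$ together with $x$ to lift to a reduction of $I$. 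Induction on $\ell$ then concludes the proof. The main obstacle is controlling the analytic spread through both reductions---establishing $\ell(I(R/H))=\ell$ and $\ell(I(R/(x)))=\ell-1$---each of which rests on careful Artin-Rees arguments and the superficiality of generic elements.
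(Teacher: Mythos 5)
Your proposal is essentially correct but takes a genuinely different, self-contained route. The paper's own proof is a two-line citation: item (a) is \cite[2.3(iii)]{AM97}, (b) is immediate from (a) and the vanishing $c_i(I)=0$ for $i<\htt I$, and (c) is \cite[2.3(ii)]{AM97} combined with Ratliff's theorem that $G_I(R)$ is equidimensional and catenary when $R$ is quasi-unmixed \cite[proof of 3.8]{R1}. You instead re-derive (a) from the Length Formula (Remark~\ref{htt}) by localizing at the height-$i$ primes of $V(I)$ (which for $i\le \htt I$ are minimal over $I$, and automatically contain $(x_1,\ldots,x_{i-1}):I^\infty$ by Krull's height theorem) and identifying the local length with $c_i(IR_\p,R_\p)=e(I,R_\p)$; and you prove the nonvanishing $c_{\ell(I)}(I)\neq 0$ in (c) by induction on $\ell(I)$, first factoring out $0:I^\infty$ via Corollary~\ref{COR}(\ref{modulo}) and then slicing by a general element via Corollary~\ref{COR}(\ref{grade}),(\ref{section}), with the analytic spread controlled by superficiality and the lifting of reductions \cite[8.6.1]{SH}. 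This is exactly the slicing/lifting mechanism the paper deploys inside the proof of Theorem~\ref{MT}, so your argument stays within the paper's toolkit; what it buys is independence from the bigraded-ring results of \cite{AM97} and \cite{R1} that the paper cites, at the cost of redoing the reduction bookkeeping (equidimensionality of $R/(0:I^\infty)$ and $R/(x)$, $\ell(I(R/(0:I^\infty)))=\ell$, $\ell(I(R/(x)))=\ell-1$), all of which you handle correctly in outline.

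Two steps need tightening. First, in (a) the claim that general elements of $I$ ``remain general'' in $IR_\p$ is not meaningful as stated: genericity in $I$ is a condition on images in $k^{sn}$, genericity in $IR_\p$ on images in $k(\p)^{sn}$, and there is no map relating these spaces. The fix is the paper's own device: after a purely transcendental residue field extension (so that $k$ is not algebraic over a finite field), choose $x_1,\ldots,x_i$ as general $A$-linear combinations of a generating set of $I$ as in Lemma~\ref{general}, applied to the finitely many primes $\p\in\Min(I)$ of height $i$ together with $\m$; this is precisely how Propositions~\ref{COR2} and \ref{greater} are proved, so the repair is routine but must be made explicit. Second, Corollary~\ref{COR}(\ref{grade}),(\ref{section}) and the existence of general (hence superficial, nonzerodivisor, part-of-minimal-reduction) elements require an infinite residue field; you should pass to $R(z)=R[z]_{\m[z]}$ at the outset, noting that this changes neither the $c_i$ nor $\ell(I)$ nor the hypotheses on $R$ (for (c) the standing assumption is universal catenarity, so quasi-unmixedness is preserved). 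With these repairs your argument is complete and correct.
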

\begin{proof} Recall that $c_i(I)=0$ if $i < {\rm ht} \ I$ or $i> \ell(I)$. Now
item (a) follows from \cite[2.3(iii)]{AM97}, and
(b) is an immediate consequence of (a). Part (\ref{ell}) follows from
\cite[2.3(ii)]{AM97} and 
the fact that the associated graded ring $G_I(R)$ is equidimensional and catenary, see \cite[proof of 3.8]{R1}. 
 \end{proof}

%\smallskip

Now we want to compare the multiplicity sequence of an ideal with that of its localizations.  
The following lemmas allow us to work with elements that are general in an ideal and in its localizations at finitely many primes. 

\begin{lemma}\label{open}
\begin{enumerate}[$($a$)$]
\item Let $\kappa \subset K$ be a field extension with $\kappa$ infinite.
Every dense open subset of $K^n$ contains a dense open subset of $\kappa^n$.
\item Let $A$ be a discrete valuation ring with infinite residue field $\kappa$ and quotient field $Q$, and 
consider the natural maps $\pi: A^n \twoheadrightarrow \kappa^n$ and $\eta: A^n \hookrightarrow Q^n$.
For every dense open subset $U$ of  $Q^n$ there exists a dense open subset
$W$ of $\, \kappa^n$ such that  $\, \eta(\pi ^{-1}(W)) \subset U$. 
\end{enumerate}
\end{lemma}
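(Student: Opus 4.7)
The plan is to reduce both parts to principal open sets of the form $\{f \neq 0\}$ and then argue coefficient-wise, exploiting that $\kappa$ is infinite so that $\kappa^n$ is irreducible in its Zariski topology (every nonempty open subset is therefore dense).

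For part (a), every dense open $U \subset K^n$ contains a principal open $\{f \neq 0\}$ for some nonzero $f \in K[x_1, \ldots, x_n]$, so it suffices to handle this case. Fixing a $\kappa$-basis $\{e_j\}_{j \in J}$ of $K$, I would write $f = \sum_{j} e_j f_j$ with $f_j \in \kappa[x_1, \ldots, x_n]$ and only finitely many nonzero. Since the $e_j$ are $\kappa$-linearly independent, for $a \in \kappa^n$ the vanishing $f(a) = 0$ is equivalent to $f_j(a) = 0$ for every $j$, so
\[
\{a \in \kappa^n : f(a) \neq 0\} \;=\; \bigcup_j \{a \in \kappa^n : f_j(a) \neq 0\}.
\]
This is open in $\kappa^n$, and it is nonempty because some $f_j$ is nonzero and a nonzero polynomial in finitely many variables over the infinite field $\kappa$ admits a non-root in $\kappa^n$; density is then automatic.

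For part (b), I would argue analogously. A dense open $U \subset Q^n$ contains $\{f \neq 0\}$ for some nonzero $f \in Q[x_1, \ldots, x_n]$. Clearing a common denominator (which is a unit in $Q$), I may assume $f \in A[x_1, \ldots, x_n]$ without altering the locus $\{f \neq 0\}$. Letting $t$ be a uniformizer of $A$, factor out the largest power of $t$ dividing every coefficient of $f$: $f = t^k g$ with $g \in A[x_1, \ldots, x_n]$ whose reduction $\overline{g} \in \kappa[x_1, \ldots, x_n]$ modulo $\m_A$ is nonzero. Set
\[
W \;:=\; \{b \in \kappa^n : \overline{g}(b) \neq 0\} \;\subset\; \kappa^n.
\]
Because $\kappa$ is infinite and $\overline{g}\neq 0$, the set $W$ is nonempty open, hence dense. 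For $a \in \pi^{-1}(W)$, one has $\overline{g(a)} = \overline{g}(\pi(a)) \neq 0$, so $g(a)$ is a unit in $A$; since $t^k$ is a unit in $Q$, this forces $f(a) = t^k g(a) \neq 0$ in $Q$, and therefore $\eta(a) \in \{f \neq 0\} \subset U$.

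I do not anticipate any serious obstacle: both parts are essentially bookkeeping about polynomials with coefficients in a field extension and in a DVR, respectively. The points requiring a moment of care are the reduction to a single principal open (which uses only that finite intersections of dense opens are dense opens) and, in (a), the basis decomposition $f = \sum_j e_j f_j$, which is what guarantees that the condition $f\neq 0$ actually cuts out a Zariski open subset of $\kappa^n$; in (b) the analogous role is played by pulling out $t^k$ so that the residue polynomial $\overline g$ is a genuine nonzero element of $\kappa[x_1,\ldots,x_n]$.
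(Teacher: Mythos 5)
Your proof is correct, and part (b) is essentially the paper's own argument: after reducing to a basic open set $D_f$ you rescale $f$ by a power of the uniformizer so that its reduction modulo the maximal ideal is a nonzero polynomial $\overline{g}\in\kappa[x_1,\ldots,x_n]$, and take $W=D_{\overline{g}}$; the paper does exactly this (multiplying by $t^{k}$ with $k\in\mathbb{Z}$ in a single step). Part (a), however, follows a genuinely different route. The paper factors the extension $\kappa\subset K$ as a purely transcendental extension followed by an algebraic one and treats the two cases separately: in the algebraic case it uses that $\kappa[x_1,\ldots,x_n]\subset K[x_1,\ldots,x_n]$ is an integral extension of domains, so $(f)$ contracts to a nonzero ideal $I$ with $V(f)\cap\kappa^n\subset V(I)$; in the purely transcendental case it takes the ideal generated by the coefficients of $f$ viewed as a polynomial in the transcendence basis. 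You replace this case analysis by expanding $f=\sum_j e_j f_j$ along a $\kappa$-vector-space basis of $K$ and noting that, for $a\in\kappa^n$, linear independence gives $f(a)=0$ if and only if $f_j(a)=0$ for all $j$; hence $\{a\in\kappa^n: f(a)\neq 0\}=\kappa^n\setminus V(\{f_j\})$ is open, nonempty because some $f_j\neq 0$ and $\kappa$ is infinite, and dense by irreducibility of $\kappa^n$. This handles an arbitrary extension uniformly and more elementarily (no integrality argument, no reduction to the two standard types of extensions), and it produces explicitly the ideal $(\{f_j\})$ cutting out the complement of the desired open set; the paper's purely transcendental case is essentially the special instance in which the basis elements are monomials in the transcendence basis. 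One cosmetic remark: the reduction to a principal open set is best justified not by intersecting dense opens but simply by the fact that a nonempty Zariski-open subset contains a nonempty $D_f$, with density automatic from irreducibility of $K^n$ and $\kappa^n$ over infinite fields, which is what you in fact use.
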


\begin{proof} (a) Since any field extension is a purely transcendental extension followed by an algebraic
extension, we may assume that the field extension $\kappa \subset K$ is either algebraic or purely transcendental. 

Let $U$ be a dense open subset of $K^n$. We may suppose that $U$ is a basic open subset, say $U= K^n \setminus V(f)$ with $0 \neq f \in K[x_1, \ldots, x_n]$.
We need to prove that $V(f) \cap \kappa^n \subset V(I)$ for some nonzero ideal $I$ of
the polynomial ring $\kappa[x_1, \ldots, x_n]$. This is clear if the extension $\kappa \subset K$ is algebraic  because then $\kappa[x_1, \ldots, x_n] \subset K[x_1, \ldots, x_n]$ is an integral extension of domains and therefore 
the ideal generated by $f$ contracts to a nonzero ideal $I$ of $\kappa[x_1, \ldots, x_n]$. If the extension $\kappa \subset K$ is purely transcendental, then $K$ is the 
quotient field of a polynomial ring $\kappa[\{y_i\}]$. After clearing denominators,
we may assume that $f \in \kappa[\{y_i\}, x_1, \ldots, x_n]$. We think of $f$ as a polynomial in the variables $y_i$ and
let $I \subset \kappa[x_1, \ldots, x_n]$ be the ideal generated by its coefficients. Then $I \neq 0$ since $f \neq 0$,
and $V( f) \cap \kappa^n =V(I)$ since the elements $y_i$ are algebraically independent over $\kappa$.
\smallskip

\noindent
(b) Again we may assume that $U$ is a basic open set, say $U= Q^n \setminus V(f)$ with $0 \neq f \in Q[x_1, \ldots, x_n]$.
Multiplying $f$ by a power of the uniformizing parameter $t$ of $A$, with exponent in $\mathbb Z$, we 
may assume that $f \in A[x_1, \ldots, x_n] \setminus  (t A)[x_1, \ldots, x_n]$. Let $\overline f$ be the
image of $f$ in $\kappa[x_1, \ldots, x_n]$ and notice that $\overline f \neq 0$. The dense open subset
$W:=D_{\overline f} \, $ has the desired property.
\end{proof} 

\vspace{0.0in}

\begin{lemma}\label{general} 
Let $(R, \m, k)$ be a local ring and assume that  $k$ is not an algebraic extension of a finite field. Let $R_0$ be the prime ring of $R$ and $k_0$ the prime field of $k$.
If $\, {\rm char} (k_0) =0 \,$ let $y$ be an element of $R_0 \,$, and if  $\, {\rm char} (k_0) > 0 \,$
let $y$ be a preimage in $R$ of an element of $k$ that is algebraically independent over $k_0$ $($such an element exists by our assumption$)$. 
Set $A:=(R_0[y])_{\m \cap R_0[y]}$ and let $\kappa:=k_0(y)$ be the residue field of $A$.  
Let $\{\p_1, \ldots, \p_s\}$ be a finite subset of $\, {\rm Spec}(R)$ and let $U_i$ be dense open subsets of $\, k(\p_i)^n$. 
There exists a dense open subset $U$ of $\, {\kappa }^n$ such that 
whenever the image of $\, (\lambda _1, \ldots, \lambda _n)\in A^n$ in $\kappa^n$ belongs to $U$ then the image of $\, (\lambda _1, \ldots, \lambda _n)$  in $k(\p_i)^n$ 
belongs to $U_i \,$ for every $1\le i \le s$. 
\end{lemma}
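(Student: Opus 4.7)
The plan is to determine the structure of $A$ and then, for each $\p_i$ separately, apply Lemma~\ref{open} to extract a dense open subset of $\kappa^n$ with the required local property; $U$ is then the intersection of these finitely many dense opens.

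First I identify $A$. In characteristic zero, $k\supset\mathbb Q$ forces $\m\cap\mathbb Z=0$, so $A=\mathbb Q=\kappa$ is a field. In positive residue characteristic $p$, the transcendence of $\bar y$ over $\mathbb F_p$ yields $\m\cap R_0[y]=pR_0[y]$ via a coefficient argument on polynomials (any $f(y)\in\m$ reduces to $\bar f(\bar y)=0$ in $k$, forcing each coefficient of $f$ to lie in $p\mathbb Z$). Hence $A=\mathbb F_p(y)=\kappa$ when $R_0=\mathbb F_p$, and $A=\mathbb Z[y]_{(p)}$ is a discrete valuation ring with uniformizer $p$, residue field $\mathbb F_p(y)=\kappa$, and quotient field $\mathbb Q(y)=Q$ when $R_0=\mathbb Z$.

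Next, for each $\p_i$, the contracted prime $A\cap\p_i$ is a prime of $A$ and so is either $\m_A$ or $0$. In the first case, the map $A\to k(\p_i)$ factors through a field extension $\kappa\hookrightarrow k(\p_i)$, and Lemma~\ref{open}(a) produces a dense open $W_i\subset\kappa^n$ with $W_i\subset U_i\cap\kappa^n$. In the second case, $A$ embeds into $k(\p_i)$; when $A$ is a DVR this embedding extends to a field extension $Q\hookrightarrow k(\p_i)$, and a two-step application---first Lemma~\ref{open}(a) to $Q\hookrightarrow k(\p_i)$ to obtain a dense open of $Q^n$ contained in $U_i\cap Q^n$, then Lemma~\ref{open}(b)---produces a dense open $W_i\subset\kappa^n$ such that $\eta(\pi^{-1}(W_i))\subset U_i$. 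When $A$ is already a field, $Q=A=\kappa$ and the second case collapses into the first.

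Finally, set $U:=W_1\cap\cdots\cap W_s$. Since $\kappa$ is infinite, $\kappa^n$ is irreducible in its Zariski topology, so this finite intersection of dense opens is itself dense open. By construction, any $\lambda=(\lambda_1,\ldots,\lambda_n)\in A^n$ with $\pi(\lambda)\in U$ has image in $k(\p_i)^n$ lying in $U_i$ for every $1\le i\le s$. The only real obstacle is the mixed characteristic case, in which $A$ is a genuine DVR rather than a field and one must carefully combine the residue-field and quotient-field pieces of information; this is precisely the situation for which part (b) of the preceding lemma was established.
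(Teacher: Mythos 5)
Your proof is correct and follows essentially the same route as the paper's: identify $A$ (a field or a discrete valuation ring with residue field $\kappa$ and quotient field $Q$), contract each $\p_i$ to $A$, handle the residue-field extensions $\kappa \subset k(\p_i)$ with Lemma~\ref{open}(a) and the quotient-field extensions $Q \subset k(\p_i)$ with Lemma~\ref{open}(a) followed by Lemma~\ref{open}(b), and then intersect the finitely many dense open subsets of $\kappa^n$. The only caveat is that your structural list for $A$ omits the possibility ${\rm char}\, R = p^e$ with $e \ge 2$ (so $R_0 = \mathbb{Z}/p^e\mathbb{Z}$), where $A$ is an Artinian local principal ring with nonzero nilpotents rather than $\mathbb{F}_p(y)$ or $\mathbb{Z}[y]_{(p)}$; this is harmless, since then the maximal ideal is the only prime of $A$, every $\p_i$ contracts to it, and your first case applies verbatim --- which is exactly how the paper words it ($A$ is a local principal ideal ring, a discrete valuation ring when it is not Artinian).
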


\begin{proof} 
Write $p:={\rm char} (k) \geq 0$. Notice that $A$ is a local principal ideal ring with infinite residue field $\kappa$ and
maximal ideal $\n:=pA$. If $A$ is not Artinian, then $A$ is a discrete valuation ring with $Q:={\rm Quot}(A)$.  
  
The prime ideals $\{\p_1, \ldots, \p_s\}$
contract to $\n$ if $A$ is Artinian, and to $\n$ or $0$ if $A$ is a discrete valuation ring;
the corresponding residue field extensions are $\kappa \subset k(\p_i)$, and
$\kappa \subset k(\p_i)$ or $Q \subset k(\p_i)$, respectively. 
It suffices to show our assertion for one $\p_i$. If $\kappa \subset k(\p_i)$, we apply Lemma \ref{open}(a) to this field extension. 
If on the other hand $Q \subset k(\p_i)$, we apply Lemma \ref{open}(a) to this field extension and then Lemma \ref{open}(b). 
\end{proof}

Recall that a local ring is called {\em analytically unramified} if its completion is a reduced ring.

\begin{proposition}\label{COR2} 
Let $R $ be an equidimensional and universally catenary Noetherian local ring and $I$ an ideal. 
Let $\p$ be a prime ideal and assume that $R/\p$ is analytically unramified. Then for $i\ge 0$
$$c_i(I, R_{\p})\le c_i(I, R)\, .$$
\end{proposition}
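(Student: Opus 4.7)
My plan is to apply the Length Formula (Proposition~\ref{Lformula}) to both $c_i(I,R)$ and $c_i(I,R_\p)$ using a common $i$-tuple of general elements, and compare the resulting sums term by term. As a preliminary reduction, I would pass from $R$ to $R[t]_{\m[t]}$ for an indeterminate $t$, so that the residue field becomes infinite and not algebraic over its prime field; this does not alter $c_i(I,R)$ or $c_i(I,R_\p)$, and it preserves the analytic unramifiedness of $R/\p$ since completion commutes with this extension and reducedness is inherited. I would then invoke Lemma~\ref{general}, applied to the set $\{\m,\p\} \subset \Spec R$, to produce elements $x_1,\ldots,x_i \in I$ that are simultaneously general in $I$ (with respect to $k$) and general in $IR_\p$ (with respect to $k(\p)$). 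This simultaneous genericity is the sole reason for the residue-field reduction.

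With such elements in hand, I would apply Proposition~\ref{Lformula} to both sides. For $c_i(I,R)$ I invoke Remark~\ref{htt} directly. For $c_i(I,R_\p)$, the identity $\dim R_\p/\mathfrak{q} R_\p = \htt\,\p - \htt\,\mathfrak{q}$, valid for every prime $\mathfrak{q}\subseteq\p$ because $R$ is equidimensional and catenary, translates the dimension condition into the same height condition $\htt\,\mathfrak{q}=i$. Both sums then range over primes $\mathfrak{q}\supseteq I$ of height $i$ satisfying $(x_1,\ldots,x_{i-1}):I^\infty \subseteq \mathfrak{q}$, with the $R_\p$-sum restricted to the additional constraint $\mathfrak{q}\subseteq\p$. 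The two colon conditions agree on such primes because colon ideals commute with localization, and the canonical identification $(R_\p)_{\mathfrak{q} R_\p} = R_\mathfrak{q}$ makes the length factors coincide. Hence the $R_\p$-sum is obtained from the $R$-sum by deleting the primes not contained in $\p$ and by replacing each multiplicity $e(R/\mathfrak{q})$ by $e((R/\mathfrak{q})_\p)$.

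The proposition therefore reduces to the multiplicity inequality $e((R/\mathfrak{q})_\p) \le e(R/\mathfrak{q})$ for every prime $\mathfrak{q}\subseteq\p$ of height $i$ containing $I$, and this is where I expect the main obstacle to lie. I would deduce it from the classical monotonicity of the Hilbert--Samuel multiplicity on the spectrum of a formally equidimensional local ring, applied to $A := R/\mathfrak{q}$: the ring $A$ is a universally catenary local domain, hence formally equidimensional by Ratliff's theorem; its quotient $A/\p A = R/\p$ is analytically unramified by hypothesis, is automatically equidimensional as a domain, and satisfies $\htt(\p A) + \dim A/\p A = \dim A$ by catenarity. This is precisely where the analytic unramifiedness of $R/\p$ is used. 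Everything else is bookkeeping driven by the Length Formula together with the simultaneous genericity furnished by Lemma~\ref{general}.
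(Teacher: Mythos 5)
Your proposal is correct and takes essentially the same route as the paper's proof: a purely transcendental residue field extension, an application of Lemma~\ref{general} to $\{\m,\p\}$ so that the same general elements compute both $c_i(I,R)$ and $c_i(I,R_\p)$ via the length formula of Remark~\ref{htt}, and the reduction to the inequality $e((R/\q)_{\p})\le e(R/\q)$ for $\q\subset\p$. The paper settles that last inequality by citing Nagata's theorem with exactly the two hypotheses you verify (the dimension equality coming from equidimensionality and catenarity, and the analytic unramifiedness of $R/\p$), so there is no gap.
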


\begin{proof}  
The localization $R_{\p}$ is also equidimensional and universally catenary. After a purely trans-cendental residue field extension as in the proof of
Corollary~\ref{COR}, we may 
assume that the residue field of $R$ is not an algebraic extension of a finite field.
Write $\m$ for the maximal ideal of $R$. 
An application of Lemma~\ref{general}, with $\p_1:= \m$ and $\p_2:=\p$, shows that we can use the same elements $x_1, \ldots, x_i$ 
in the length formula of Remark~\ref{htt} to compute $c_i(I, R)$ and $c_i(I, R_{\p})$.
Now, to deduce our assertion we only need to prove that $e((R/\q)_{\p}) \leq e(R/\q)$ whenever $\q\subset \p$.
By \cite[40.1]{N} this inequality holds because $\, \dim \, R/\p + {\rm ht} \, (\p/\q) = \dim \, R/\q \, $ and $R/\p$ is analytically unramified. 
\end{proof}

\smallskip

\section{The key technical result}

The aim of this section is to prove a technical result, Theorem~\ref{MT}. This result will play a crucial role in our solution to Conjecture \ref{Q}.
We begin with a lemma establishing an inequality between Hilbert-Samuel multiplicities.

\begin{lemma}\label{SL} 
Let $R $ be a Noetherian local ring.  Let $\, \underline{x}, \underline{t} \,$ be a system of parameters and assume that $\underline{x}$ is a regular sequence on $R$. Then $$e((\underline{x}), R/(\underline{t})) \ge e((\underline{t}), R/(\underline{x})) \, .$$
\end{lemma}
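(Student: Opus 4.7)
The plan is to apply Lech's inequality $e(\mathfrak{q};M) \le \lambda(M/\mathfrak{q}M)$ (valid for any parameter ideal $\mathfrak{q}$ on a finitely generated module $M$) to the ideal $(\underline{t})$ acting on $R/(\underline{x})^n$, and then pass to a limit as $n \to \infty$. Write $\underline{x} = x_1,\ldots,x_r$ and $\underline{t} = t_1,\ldots,t_s$, so $r+s = \dim R$. Since $\underline{x}$ is a regular sequence, $\dim R/(\underline{x}) = s$, and since $\underline{x},\underline{t}$ is a system of parameters, the image of $\underline{x}$ is a system of parameters of $R/(\underline{t})$, which has dimension $r$; hence both multiplicities in the inequality are meaningful.

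For each $n \ge 1$, the ring $R/(\underline{x})^n$ has the same support as $R/(\underline{x})$, hence dimension $s$, and $(\underline{x})^n + (\underline{t})$ is $\mathfrak{m}$-primary, so $(\underline{t})$ is a parameter ideal on $R/(\underline{x})^n$. Lech's inequality then gives
$$\lambda\bigl(R/((\underline{x})^n + (\underline{t}))\bigr) \;\ge\; e\bigl((\underline{t}),\, R/(\underline{x})^n\bigr).$$

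The heart of the argument is to compute the right-hand side in terms of $e((\underline{t}), R/(\underline{x}))$, and here the regularity of $\underline{x}$ enters crucially. Because $\underline{x}$ is regular, the associated graded ring $\operatorname{gr}_{(\underline{x})}R$ is isomorphic to the polynomial ring $(R/(\underline{x}))[T_1,\ldots,T_r]$. Consequently, the filtration $R/(\underline{x})^n \supset (\underline{x})/(\underline{x})^n \supset \cdots \supset (\underline{x})^{n-1}/(\underline{x})^n \supset 0$ has successive quotients $(\underline{x})^i/(\underline{x})^{i+1}$ that are free $R/(\underline{x})$-modules of rank $\binom{r+i-1}{i}$, and in particular all are $s$-dimensional like $R/(\underline{x})^n$ itself. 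Additivity of multiplicity across these short exact sequences combined with the hockey-stick identity yields
$$e\bigl((\underline{t}),\, R/(\underline{x})^n\bigr) \;=\; \binom{r+n-1}{r}\, e\bigl((\underline{t}),\, R/(\underline{x})\bigr).$$

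To conclude, multiply both sides of the Lech inequality by $r!/n^r$ and let $n\to\infty$. The left-hand side converges to $e((\underline{x}), R/(\underline{t}))$ by the very definition of the Hilbert--Samuel multiplicity on the $r$-dimensional ring $R/(\underline{t})$, while the coefficient $\binom{r+n-1}{r}\cdot r!/n^r$ tends to $1$, leaving $e((\underline{t}), R/(\underline{x}))$ on the right. The main delicate point is the additivity computation in the third step: it relies on the identification of $\operatorname{gr}_{(\underline{x})} R$ with a polynomial ring, which is exactly the input supplied by the regular-sequence hypothesis on $\underline{x}$. Everything else is a standard invocation of Lech's inequality and a routine asymptotic estimate of a binomial coefficient.
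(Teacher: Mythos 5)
Your argument is correct, but it follows a genuinely different route from the paper. The paper's proof is two citations: since $\underline{t}$ is part of a system of parameters, $e((\underline{x}), R/(\underline{t})) \ge e((\underline{x},\underline{t}), R)$ by \cite[1.2.12]{FOV}, and since $\underline{x}$ is a regular sequence, $e((\underline{x},\underline{t}), R) = e((\underline{t}), R/(\underline{x}))$ by \cite[1.2.14]{FOV} (or by the Auslander--Buchsbaum formula), so the inequality falls out through the intermediate quantity $e((\underline{x},\underline{t}),R)$. You instead prove the statement asymptotically from scratch: you apply the parameter-ideal inequality $e(\mathfrak{q},M)\le \lambda(M/\mathfrak{q}M)$ to $M=R/(\underline{x})^n$ (note this is \cite[14.10]{Mat}, the same fact the paper invokes in the proof of Theorem~\ref{MT}, rather than what is usually called Lech's inequality, namely $e(I)\le d!\,e(R)\,\lambda(R/I)$ --- a naming quibble only), you use the regular-sequence hypothesis via the isomorphism $\operatorname{gr}_{(\underline{x})}R\cong (R/(\underline{x}))[T_1,\dots,T_r]$ and additivity of multiplicity in top dimension to get $e((\underline{t}),R/(\underline{x})^n)=\binom{n+r-1}{r}\,e((\underline{t}),R/(\underline{x}))$, and you identify $\lambda\bigl(R/((\underline{x})^n+(\underline{t}))\bigr)$ as the Hilbert--Samuel function of $(\underline{x})$ on the $r$-dimensional ring $R/(\underline{t})$ before passing to the limit; all of these steps, including the hockey-stick summation and the asymptotics of the binomial coefficient, check out, as does the dimension bookkeeping ($\dim R/(\underline{x})=s$, $\dim R/(\underline{t})=r$). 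What the paper's approach buys is brevity and a conceptual reduction to standard associativity-type multiplicity facts; what yours buys is self-containedness, since it essentially reproves those facts using only the elementary inequality from Matsumura and the freeness of the associated graded module.
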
 
\begin{proof} 
Since $\underline{t}$ is part of a system of parameters, we have
$$e((\underline{x}), R/(\underline{t})) \geq e((\underline{x}, \underline{t}), R)$$
by  \cite[1.2.12]{FOV}, and as 
$\, \underline{x}, \underline{t} \,$ is a system of parameters and $\underline{x}$ is a regular sequence, 
$$e((\underline{x}, \underline{t}), R) = e((\underline{t}), R/(\underline{x}))$$ 
according to \cite[1.2.14]{FOV}. Alternatively, one can use  the multiplicity formula of Auslander and Buchsbaum for systems of parameters  \cite[4.3]{AB}.
\end{proof}

\begin{comment}
If we apply Proposition \ref{AB}  to compute $e((\underline{x}), R/(\underline{t}))$ and $e((\underline{t},\underline{x}), R)$, we have 
$$e((\underline{x}), R/(\underline{t})) \ge e((\underline{t}, \underline{x}), R) = e((\underline{x}, \underline{t}), R).$$
If we apply Proposition \ref{AB} to compute $e((\underline{x},\underline{t}), R)$ and $e((\underline{t}), R/(\underline{x}))$, we have 
$$e((\underline{x},\underline{t}), R) = e((\underline{t}), R/(\underline{x}))$$
because $\underline{x}$ is a regular sequence. 
\end{comment}

By $\Min(\cdot)$ we denote the set of minimal prime ideals of a given ideal or of the ideal generated by a given collection of elements.

\begin{lemma} \label{sop}
Let $R$ be a Noetherian local ring and let $\underline{t}$ be part of a system of parameters of $R$. Then
$$\sum_{\p \in \Min(\underline t)} e((\underline{t}), R_\p) \cdot e(R/\p) \ge  \ e(R)\, .$$
\end{lemma}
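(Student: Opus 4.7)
The plan is to extend $\underline{t}$ to a full system of parameters of $R$ by appending $d-s$ general elements $\underline{x}$ of $\m$, then to invoke the classical associativity formula for the Hilbert-Samuel multiplicity of a parameter ideal in order to express $e((\underline{t},\underline{x}),R)$ in terms of the primes of $\Min(\underline{t})$, and finally to combine this with the antitonicity $e(\m,R)\le e((\underline{t},\underline{x}),R)$.

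To give the general choice of $\underline{x}$ a well-defined meaning, I first reduce, via the purely transcendental residue field extension $R\leadsto R[z]_{\m[z]}$ used in the proofs of Corollary~\ref{COR}(\ref{modulo}) and Proposition~\ref{COR2}, to the case where the residue field of $R$ is infinite and not algebraic over a finite field; this leaves every quantity in the statement unchanged. Set $d=\dim R$ and choose $\underline{x}=x_1,\ldots,x_{d-s}$ general enough that $\underline{t},\underline{x}$ is a system of parameters of $R$ and that, moreover, for every prime $\p$ in the finite set $\{\p\in\Min(\underline{t}):\dim R/\p=d-s\}$, the image of $\underline{x}$ in $R/\p$ is general in $\m/\p$; the compatibility of this single choice across the finitely many residue fields $k(\p)$ is exactly what Lemma~\ref{general} furnishes. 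Since $\dim R/\p=d-s=\ell(\m/\p)$, the $d-s$ general elements of $\m/\p$ so obtained generate a minimal reduction of $\m/\p$, and hence
$$e((\underline{x}),R/\p)\;=\;e(\m/\p,R/\p)\;=\;e(R/\p).$$
With this in place, the classical associativity formula for a parameter ideal yields
$$e((\underline{t},\underline{x}),R)\;=\;\sum_{\substack{\p\in\Min(\underline{t})\\ \dim R/\p=d-s}}e((\underline{t}),R_\p)\cdot e((\underline{x}),R/\p)\;=\;\sum_{\substack{\p\in\Min(\underline{t})\\ \dim R/\p=d-s}}e((\underline{t}),R_\p)\cdot e(R/\p),$$
where the second equality uses the identity established just above. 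Enlarging the index set from $\{\p\in\Min(\underline{t}):\dim R/\p=d-s\}$ to all of $\Min(\underline{t})$ only adds non-negative terms, so $e((\underline{t},\underline{x}),R)\le\sum_{\p\in\Min(\underline{t})}e((\underline{t}),R_\p)\cdot e(R/\p)$. Since $(\underline{t},\underline{x})\subseteq\m$ is $\m$-primary, antitonicity of the Hilbert-Samuel multiplicity gives $e(R)=e(\m,R)\le e((\underline{t},\underline{x}),R)$, and chaining the last two inequalities proves the claim.

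The main obstacle is the simultaneous general choice of $\underline{x}$: one and the same sequence must descend, modulo each of the finitely many primes $\p\in\Min(\underline{t})$ with $\dim R/\p=d-s$, to a sequence general enough to generate a minimal reduction of $\m/\p$. This is exactly the situation that the preparatory Lemmas~\ref{open} and~\ref{general} are designed to handle. A subsidiary point is to invoke a form of the associativity formula that requires only $\underline{t},\underline{x}$ to be a system of parameters, with no Cohen-Macaulay hypothesis on $R$; this is a standard fact in the theory of Hilbert-Samuel multiplicities.
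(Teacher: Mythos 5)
Your overall strategy is the paper's: adjoin general elements of $\m$ to $\underline{t}$, convert $e(R/\p)$ into the multiplicity of those general elements modulo $\p$ via a reduction argument, and compare with $e(R)\le e((\underline{t},\underline{x}),R)$. The one genuine problem is the middle step. The ``classical associativity formula'' you invoke as an equality, $e((\underline{t},\underline{x}),R)=\sum_{\p\in\Min(\underline{t}),\,\dim R/\p=d-s}e((\underline{t}),R_\p)\cdot e((\underline{x}),R/\p)$, is not a standard fact for arbitrary Noetherian local rings; what holds in general is only the inequality $e((\underline{t},\underline{x}),R)\le\sum_{\p}e((\underline{t}),R_\p)\cdot e((\underline{x}),R/\p)$, which is \cite[24.7]{N} and is exactly what the paper cites. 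Equality can fail: take $R=k[[x,y,z]]/(xy,xz)$, $\underline{t}=y$, $\underline{x}=x+z$. Then $e((y,x+z),R)=1$ by the additivity formula (only the two-dimensional minimal prime $(x)$ contributes), while $\Min(y)=\{(x,y),\,(y,z)\}$, both primes have $\dim R/\p=1=d-s$, and each contributes $e(yR_\p,R_\p)\cdot e((x+z),R/\p)=1$, so your right-hand side equals $2$. Thus the closing claim that the equality form is ``a standard fact'' requiring no hypotheses beyond $\underline{t},\underline{x}$ being a system of parameters is wrong. Fortunately your chain only uses the direction $e((\underline{t},\underline{x}),R)\le\sum(\cdots)$, so the argument is repaired by weakening ``$=$'' to ``$\le$'' and citing \cite[24.7]{N}; with that change it coincides in substance with the paper's proof.

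A secondary remark: your appeal to Lemma~\ref{general} to choose $\underline{x}$ simultaneously general modulo each top-dimensional $\p\in\Min(\underline{t})$ works but is heavier than needed. The paper takes $\underline{y}$ general in $\m$ so that its image in $R/(\underline{t})$ generates a minimal reduction of $\m(R/(\underline{t}))$; since every $\p\in\Min(\underline{t})$ contains $(\underline{t})$, the image of $\underline{y}$ is then automatically a reduction of $\m(R/\p)$ for every $\p\in\Min(\underline{t})$, not just the top-dimensional ones, giving $e((\underline{y}),R/\p)=e(R/\p)$ with only an infinite residue field and no compatibility-of-generality lemma; this also lets one apply the Nagata inequality with the sum over all of $\Min(\underline{t})$ directly, without the enlargement step.
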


\begin{proof}
We may assume that the residue field  of $R$ is infinite.
Let $\underline{y}$ be a sequence of general elements of the maximal ideal of $R$ that form a system of parameters of $R/(\underline{t}).$ 
Notice that the image of $\underline{y}$ in $R/(\underline t)$ generates a minimal reduction of the maximal ideal, and hence the
image of $\underline y$ in $R/\p$ generates a reduction of the maximal ideal  for every $\p \in \Min(\underline t)$. From this it follows that 
$e(R/\p) = e((\underline{y}), R/\p)$. Therefore,
$$\sum_{\p \in \Min(\underline t)} e((\underline{t}), R_\p)\cdot e(R/\p) = \sum_{\p \in \Min(\underline t)} e((\underline{t}), R_{\p}) \cdot e(\underline{y}, R/\p) \ge e((\underline{t}, \underline{y}), R) \geq e(R) \, ,$$
where the first inequality holds by  \cite[24.7]{N}.
\end{proof}
 
The next theorem provides a condition, in terms of multiplicity sequences, for when 
a collection of elements is transversal to every prime ideal $\p \in L(I)$. For an ideal $I$ of a Noetherian ring $R$, we denote by $L(I)$
the set of prime ideals $\p \in V(I)$ where the ideal $I$ has maximal analytic spread, namely $\ell(I_\p) = {\rm ht}\, \p$.
The set $L(I)$ is finite. Indeed, if $\p \in L(I)$ then $\p$ is the contraction of a minimal prime of the associated graded 
ring $G_I(R)$ (see also  \cite[3.9 and 4.1]{Mc}). The converse holds whenever $R$ is equidimensional, universally catenary, and local.
As we will see in Theorem~\ref{Mc}, $L(I)$ is also the collection of prime ideals that are critical for proving integral dependence.

\begin{theorem}\label{MT} 
Let $R$ be an equidimensional and universally catenary Noetherian local ring of dimension $d$. Let
$\underline{t}=t_1, \ldots, t_r$ be elements in $R$ that form part of a system of parameters of $R$. Let $I$ be an ideal of $R$ and
assume that $\,{\rm ht} \, (\underline{t}, I, 0:I^{\infty}) > r$. If
$$c_i(I, R/(\underline{t})) \leq c_i(I, R) \quad \mbox{ for  } \ 1\le i \le d-r\, ,$$ 
then   $t_1, \ldots, t_r$ form part of a system of parameters of $R/\p\, $ for every  $\p\in L(I)$. 
\end{theorem}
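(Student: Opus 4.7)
The plan is to proceed by contradiction. Suppose there is a prime $\mathfrak{p} \in L(I)$ for which $\underline{t}$ is not part of a system of parameters of $R/\mathfrak{p}$. Set $h := \operatorname{ht}\mathfrak{p}$. Since $R$ is equidimensional and catenary, this failure means some prime $\mathfrak{q}$ minimal over $(\mathfrak{p}, \underline{t})$ has $\operatorname{ht}\mathfrak{q} < h + r$; write $s := \operatorname{ht}\mathfrak{q} - h \in \{0, \ldots, r-1\}$. In the equidimensional catenary ring $R/(\underline{t})$, the prime $\overline{\mathfrak{q}} := \mathfrak{q}/(\underline{t})$ has height $i := h + s - r$. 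The hypothesis $\operatorname{ht}(\underline{t}, I, 0{:}I^{\infty}) > r$ forces $i \geq 1$: the boundary case $i = 0$ would require $\mathfrak{q}$ to be a minimal prime of $(\underline{t})$ of height $r$ containing $I$, and since $\mathfrak{p} \subsetneq \mathfrak{q}$, the hypothesis on $0{:}I^{\infty}$ combined with $\mathfrak{p} \in L(I)$ rules this out. Also $i \leq d - r$, so the multiplicity-sequence assumption $c_i(I, R/(\underline{t})) \leq c_i(I, R)$ applies at this index.

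Next, I enlarge the residue field of $R$ via the flat extension $R \to R(z) := R[z]_{\mathfrak{m}[z]}$, which preserves every hypothesis as in the proof of Corollary \ref{COR}, to ensure that $k$ is infinite and not algebraic over a finite field. Lemma \ref{general} then provides elements $x_1, \ldots, x_i \in I$ that are simultaneously general in $R$, in $R_{\mathfrak{p}}$, in $R_{\mathfrak{q}}$, and in $R/(\underline{t})$. Applying the Length Formula (Remark \ref{htt}) to both sides of $c_i(I, R/(\underline{t})) \leq c_i(I, R)$, I observe that the right-hand side sums over primes of height $i$ in $R$; since $h > i$, the prime $\mathfrak{p}$ itself does not contribute there. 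The left-hand side, however, contains a contribution from $\overline{\mathfrak{q}}$.

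The crux is to lower-bound this $\overline{\mathfrak{q}}$-contribution by a quantity that strictly exceeds $c_i(I, R)$. Because $\mathfrak{p} \in L(I)$, the elements $x_1, \ldots, x_h$ generate a reduction of $I_{\mathfrak{p}}$, which constrains the colon ideal $(x_1, \ldots, x_{i-1}){:}I^{\infty}$ after localizing at $\mathfrak{q}$. In the zero-dimensional quotient $R_{\mathfrak{q}}/(\mathfrak{p} + (x_1, \ldots, x_h))R_{\mathfrak{q}}$, the images of $\underline{t}$ generate the maximal ideal. Here Lemma \ref{SL} allows swapping $\underline{t}$ with a piece of $\underline{x}$ in the relevant Hilbert-Samuel multiplicities, and Lemma \ref{sop} yields a lower bound involving $e(R_{\mathfrak{q}}/\mathfrak{p} R_{\mathfrak{q}})$. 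Combining these estimates produces a lower bound on the $\overline{\mathfrak{q}}$-contribution of the form $e(I, R_{\mathfrak{p}}) \cdot e(R_{\mathfrak{q}}/\mathfrak{p} R_{\mathfrak{q}}) \cdot e(R/\mathfrak{q})$, which strictly dominates $c_i(I, R)$ and yields the desired contradiction.

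The main obstacle is the careful manipulation of the colon ideals $(x_1, \ldots, x_{i-1}){:}I^{\infty}$ under passage to $R/(\underline{t})$ and localization at $\overline{\mathfrak{q}}$, and relating them back to the data at $\mathfrak{p}$. The hypothesis $\mathfrak{p} \in L(I)$ is essential: it ensures that general elements of $I$ form a reduction of $I_{\mathfrak{p}}$, so the abstract length term at $\overline{\mathfrak{q}}$ becomes a computable Hilbert-Samuel multiplicity that Lemmas \ref{SL} and \ref{sop} can quantitatively estimate. Bridging the local picture at $\mathfrak{p}$ to the global picture in $R/(\underline{t})$, while handling the $I$-torsion encoded in $0{:}I^{\infty}$, is where the technical difficulty lies.
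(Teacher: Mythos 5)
Your reduction steps are fine (the choice of the index $i=\operatorname{ht}\q - r$, the exclusion of $i=0$, passing to an infinite residue field, invoking Lemma \ref{general} for simultaneous generality), but the decisive step of your argument is a claim that is neither proved nor plausible as stated: that the single $\overline\q$-term in the length formula for $c_i(I,R/(\underline t))$ is bounded below by $e(I,R_\p)\cdot e(R_\q/\p R_\q)\cdot e(R/\q)$ and that this quantity ``strictly dominates $c_i(I,R)$.'' There is no relation between one local term at $\q$ and the global number $c_i(I,R)$, which is a sum over \emph{all} height-$i$ primes containing the relevant colon ideal (for $i\le\operatorname{ht} I$ it is $\sum e(I,R_{\mathfrak P})\,e(R/\mathfrak P)$ over all height-$i$ primes $\mathfrak P\supset I$) and can be arbitrarily large independently of anything happening at $\p$ or $\q$. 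What a contradiction argument actually requires is an inequality of the shape $c_i(I,R/(\underline t))\ \ge\ c_i(I,R)\,+\,(\hbox{a positive term at }\overline\q)$, i.e., you must show that the \emph{other} terms on the left already account for all of $c_i(I,R)$; that comparison is exactly the hard content of Theorem \ref{MT} and does not follow from Lemmas \ref{SL} and \ref{sop} alone. Even the nonvanishing of the $\overline\q$-term is unjustified: for $\overline\q$ to contribute in Proposition \ref{Lformula}/Remark \ref{htt} one needs $\overline\q\supset (x_1,\ldots,x_{i-1}):I^\infty$ and a nonzero colon length in $(R/(\underline t))_{\overline\q}$, a ring of dimension $i<\operatorname{ht}\p$, and nothing in your sketch transfers the hypothesis $\ell(I_\p)=\operatorname{ht}\p$ into information about these colon ideals downstairs. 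Your final paragraph in effect concedes that this bridge between the data at $\p$ and the picture in $R/(\underline t)$ is missing; but that bridge \emph{is} the theorem.

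For comparison, the paper never argues by contradiction at a single index. After reducing to $0:I^\infty=0$, it inducts on the analytic spread $\ell(I)$, cutting by one general element $x\in I$ and passing to $S=R/(x)$. The only direct multiplicity comparison is at $i=1$: a chain $c_1(I,R/(\underline t))\ge\cdots\ge c_1(I,R)\ge c_1(I,R/(\underline t))$ assembled from Proposition \ref{COR3}(\ref{g}), Lemma \ref{SL}, the associativity formula, Lemma \ref{sop}, and \cite[14.10]{Mat}; the forced equalities yield $\Gamma_\p=\Sigma_\p$, i.e., the height hypothesis $\operatorname{ht}(\underline t,I,(x):I^\infty)>r+1$ passes to $S$, while Corollary \ref{COR}(\ref{section}) shifts the inequalities for $i\ge2$ down to $S$; finally the conclusion is lifted back from $S$ to $R$ using that a general $x$ is superficial, so $\ell(IS_\p)=\operatorname{ht}\p S$ for $\p\in L(I)$, with the base case $\ell(I)\le1$ being immediate from $\operatorname{ht}(\underline t,I)>r$. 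Some stepwise mechanism of this kind, which lowers the codimension of $\p$ until the statement trivializes, is what your one-shot comparison lacks; as written, the proposal is an outline with the decisive estimate absent rather than a proof.
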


\begin{proof} 
We may assume that $I$ is not nilpotent.
Otherwise $I_{\p}$ is nilpotent for every $\p \in L(I)$, hence
$\ell(I_{\p})=0 \,$ and so $\, {\rm ht} \, \p =0 \,$ because
$\ell(I_{\p})= {\rm ht} \, \p$.
In this case $t_1, \ldots, t_r$  are  part of a system of parameters of $R/\p$ because $R$ is equidimensional. We may also
suppose that $I \neq R$ since otherwise $L(I) =\emptyset$. Thus the ideal $(\underline{t}, I, 0:I^{\infty})$ is proper and
therefore has height at most $d$. It follows that $r <d$.

We argue that we may replace $R$ by $\overline{R} =R/(0:I^{\infty})$. 
First, we show our assumptions are preserved. Since $I$ is not nilpotent, it follows that ${\rm ht}\, (0:I^{\infty})=0$. Moreover, every associated prime
of the ideal $0: I^{\infty}$ is an associated prime of $\, 0$. Hence, as $R$ is equidimensional,
$\overline{R}$ is equidimensional of dimension $d$ and $\underline{t}$ is part of a system of parameters of  $\overline{R}$.
Since $R$ and $\overline R$ are equidimensional and catenary, we also have ${\rm ht}\, (\underline{t}, I) \overline{R}>r$. 
For $i \ge 1$, the numbers $c_i$ do not change upon factoring out $0:I^{\infty}$, as can be seen from Corollary~\ref{COR}(\ref{modulo}).

Next, we show that if the conclusion of the theorem holds for $I\overline{R}$, it also holds for $I$. 
Let $\p\in L(I)$. 
If $\p \not\in V(0:I^{\infty} )$, then $I_{\p}$ is nilpotent. This implies $\ell(I_\p) = 0$ and hence
${\rm ht} \, \p =0$.  Now as before, 
$t_1, \ldots, t_r$ are part of a system of parameters of $R/\p$.
If $\p \in  V(0:I^{\infty} )$, then $\p \, \overline R \in {\rm Spec}(\overline R)$ and, as before, ${\rm ht}\, \p \, \overline{R}= {\rm ht} \, \p$. By the Artin-Rees Lemma
we have
$I^n \cap (0:I^{\infty} )=0$ for $n\gg 0$, hence $(I\overline R)^n=I^n$ for $n \gg 0$. Therefore, $\ell(I \overline{R}_{\p}) =\ell(I_{\p})= {\rm ht}\, \p = {\rm ht} \, \p \, \overline{R}$.
Since the assertion of the theorem holds for $I\overline{R}$, we conclude that $t_1, \ldots, t_r$ are part of a system of parameters of $\, \overline R/\p \, \overline R=R/\p$.

As $R$ can be replaced by $\overline{R}$, we may assume that $0:I^\infty = 0$ or, equivalently, ${\rm grade} \, I \ge 1$. 
Now, we are going to prove the theorem by induction on $\ell := \ell(I)$. 

Let $\ell \leq 1$. If $\p\in L(I)$,
then ${\rm ht} \, \p = \ell(I_\p) \le \ell \leq 1$ and therefore
$$ {\rm ht} \, (\underline{t}, \p) \geq {\rm ht} \, (\underline{t}, I) \geq r+1 \geq r + {\rm ht} \, \p \, .$$
Thus $t_1, \ldots, t_r$ form part of a system of parameters of $R/\p$, again since $R$ is equidimensional 
and catenary.

Let $\ell \geq 2$. After a purely transcendental residue field extension, we may assume that the 
residue field of $R$ is not algebraic over a finite field. 
Let $x$ be a general $A$-linear combination of a finite
generating set of $I$ as in Lemma \ref{general}, and keep in mind that by the same lemma, $x$ is a general element of $I$. 
Since
${\rm grade} \, I \ge 1$, it follows that $x$ is a non zerodivisor on $R$. As moreover $\ell \ge 1$, the element $x$ is part of a minimal generating set of a minimal reduction of $I$. Thus  $\ell(I S) \le \ell -1$, where $S:=R/(x)$.

We show that our assumptions pass from $I \subset R$ to $I S \subset S$. Clearly $S$ is equidimensional and universally catenary.
Since ${\rm ht} \,(\underline{t}, I) \geq r+1$
and $x$ is a general element  of $I$, we also have ${\rm ht} \, (\underline{t}, x) \geq r+1$. Therefore $t_1, \ldots, t_r$ 
form part of a system of parameters of $S$. 
Notice that $\, {\rm ht} \, I(R/(\underline{t})) \ge {\rm ht}\,  (\underline{t}, I) - r \ge 1$ and that the image of $x$ is a general element of $I(R/(\underline{t}))$. 
By Corollary \ref{COR}(\ref{section}), we have for $i \ge 2$, $c_i(I, R/(\underline{t}))=c_{i-1}(I, S/(\underline{t}))$ and $c_i(I, R)=c_{i-1}(I, S)$.
Therefore,
$$c_i(I, S/(\underline{t})) \leq c_i(I, S) \quad \mbox{ for  } \ 1\le i \le (d-1)-r=\dim S-r\, ,$$ 
as required.

It remains to prove that $\,{\rm ht}\, (\underline{t}, I, (x):I^{\infty})S> r$ or, equivalently, that
$$\,{\rm ht}\, (\underline{t}, I, (x):I^{\infty})> r+1 \, .$$
Since the ideal $(\underline{t}, I)$ has height at least $r+1$ by assumption, there are at most finitely many
prime ideals of height $r+1$ that contain it. Let $\Lambda$ be the set of these prime ideals,
$$\Lambda = \{\p \in V(\underline{t}, I)|\ {\rm ht}\ \p = r+1\}\, .$$
If $\Lambda=\emptyset$, then $ {\rm ht}\, (\underline{t}, I ) >r+1$ and we are done. Otherwise, we need to show that for every $\p \in \Lambda$ one has 
$(x) : I^{\infty} \not\subset \p$ or, equivalently, $I_{\p} \subset \sqrt{(x)_{\p}}$. To this end, fix $\p \in \Lambda$ and let $\Sigma_{\p}$ be the set of all minimal prime ideals of $(x)$ that are contained in $\p$, 
$$\Sigma_{\p}= \{\q \in {\rm Min} (x)|\ \q\subset \p\}\, .$$
Notice that these prime ideals have height one.
 Let $\Gamma_{\p}$ be the set of all prime ideals of height one that contain $I$ and are contained in $\p$,
 $$\Gamma_{\p}= \{\q \in V(I)|\ \q\subset \p  \mbox{\ and \ }   {\rm ht}\ \q = 1\}\, .$$ 
To prove that $I_{\p} \subset \sqrt{(x)_{\p}}\ $ it suffices to show that the inclusion $\Gamma_{\p}\subset \Sigma_{\p}$ is an equality. 

Finally, we introduce the set $\Gamma$ of all prime ideals of height one that contain $I$,
$$ \Gamma  = \{\q \in V(I)|\  {\rm ht}\ \q = 1\}.
$$
Since $\Gamma \subset \Min(I)$ because ${\rm ht}\, I \ge 1$, the set  $\Gamma$ is finite as well. Moreover, for all $\q \in \Gamma$ we have
\begin{equation}\label{setmin}\{\p \in \Lambda|\ \p \supset \q\} = \Min((\underline t)(R/\q))\end{equation}
because the minimal prime ideals of $(\underline{t}, \q)$ have height $r+1$. 
By Lemma \ref{general}, the image of $x$ is a general element of the ideals $\, I \,(R/(\underline{t}))_{\p}$ 
for each of the finitely many $\, \p\in \Lambda$. 
In particular, $x$ generates a reduction of these ideals, as they are ideals of one-dimensional rings. Also recall that $x$ is a non zerodivisor on $R$.  

To prove that $\Sigma_{\p}=\Gamma_{\p}$, we compare $c_1(I, R/(\underline{t}))$ and $c_1(I, R) $. We have
\vspace{.003in}
$$
\begin{array}{llll} 
\!\!\! c_1(I, R/(\underline{t}))&=&\displaystyle\sum_{ \p \in \Lambda} e(I, (R/(\underline{t}))_{\p}) \cdot e(R/\p) & \mbox{by Proposition~\ref{COR3}(\ref{g})} \\
&=& \displaystyle\sum_{ \p \in \Lambda} e((x), (R/(\underline{t}))_{\p}) \cdot e(R/\p)& \mbox{since $x$ generates a reduction of $I\, (R/(\underline{t}))_{\p}$} \\
&\ge & \displaystyle\sum_{ \p \in \Lambda} e((\underline{t}), (R/(x))_{\p}) \cdot e(R/\p) &     \mbox{by Lemma~\ref{SL} since $x$ is regular} \\
&=&\displaystyle\sum_{ \p \in \Lambda} \displaystyle\sum_{\q\in \Sigma_{\p}}  \lambda((R/(x))_{\q}) \cdot  e((\underline{t}),(R/\q)_{\p}) \cdot  e(R/\p) &  \mbox{by the associativity formula} \\
&\ge & \displaystyle\sum_{ \p \in \Lambda} \displaystyle\sum_{\q\in \Gamma_{\p}}  \lambda((R/(x))_{\q}) \cdot  e((\underline{t}),(R/\q)_{\p}) \cdot  e(R/\p)&  \mbox{since $\Gamma_{\p}\subset \Sigma_{\p}$}\\
&\ge & \displaystyle\sum_{\q\in \Gamma}  \lambda((R/(x))_{\q})  \sum_{\p \in \Lambda,\ \p\supset \q} e((\underline{t}),(R/\q)_{\p}) \cdot  e(R/\p) & \mbox{by switching the summation} \\ 
&\ge & \displaystyle\sum_{\q\in \Gamma}  \lambda((R/(x))_{\q}) \cdot e(R/\q) &  \mbox{by  Lemma \ref{sop} and (\ref{setmin})}\\
&\ge & \displaystyle\sum_{\q\in \Gamma}  e((x), R_{\q}) \cdot e(R/\q ) &  \mbox{by \cite[14.10]{Mat}}\\
&\ge & \displaystyle\sum_{\q\in \Gamma}  e(I, R_{\q}) \cdot e(R/\q ) &  \mbox{since $x \in I$}\\
&=  & c_1(I, R) & \mbox{by Proposition~\ref{COR3}(\ref{g})} \\
&\ge & c_1(I, R/(\underline{t})) & \mbox{by assumption as $r<d$}. 
\end{array}
$$
\smallskip
\noindent
It follows that all inequalities above are equalities. In particular, $\Sigma_{\p}=\Gamma_{\p}$ for every $\p\in \Lambda$, as asserted. 

We have now shown that our assumptions pass from $I \subset R$ to $I S \subset S$. Since $\ell(I S)\le \ell -1$, the induction hypothesis shows that  the assertion of the theorem holds for $I S \subset S$. 
To lift the assertion from $I S$ back to $I$, recall that $x$ is a non zerodivisor on $R$. Fix $\p \in L(I)$. 
By Lemma \ref{general}, the element $x$ is general in $I_{\p}$ and hence superficial. It follows that the preimage of any reduction of $I S_{\p}$ is a reduction of $I_{\p}$, see for instance \cite[8.6.1]{SH}, which gives 
$\ell(I S_{\p})\ge \ell(I_{\p})-1 = {\rm ht}\, \p -1={\rm ht}\, \p S,$
and hence $\ell(I S_{\p})={\rm ht}\, \p S$. Thus, by the induction hypothesis $t_1, \ldots, t_r$ form part of a system of parameters of $S/\p S=R/\p$, as required. 
\end{proof}

\smallskip

\section{Integral Dependence}

We begin by recalling the known fact that integral dependence over an ideal $I$ can be checked locally at the finitely many prime ideals in $L(I)=\{ \p \in V(I) \mid  \ell(I_\p) = {\rm ht}\, \p\}$.  

\begin{theorem} \label{Mc}
Let $R$ be an equidimensional and universally catenary Noetherian local ring and let $I \subset J$ be ideals. The ideal $J$ is integral over $I$ if and only if $J_\p$ is integral 
over $I_\p \, $ for every prime ideal $\p \in L(I)$.
\end{theorem}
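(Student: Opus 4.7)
The forward direction is immediate: integral dependence of ideals is preserved under localization, so if $J$ is integral over $I$ then $J_\p$ is integral over $I_\p$ for every prime $\p$, in particular for $\p \in L(I)$. The content of the theorem is the converse, and my plan is to deduce it from the Rees valuation criterion combined with the description of $L(I)$ via minimal primes of $G_I(R)$ given in the paragraph preceding the theorem.

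Recall that $\overline{I}$ is determined by finitely many discrete valuations, the Rees valuations $v_1,\ldots,v_m$ of $I$: an element $y\in R$ lies in $\overline{I}$ if and only if $v_i(y)\ge v_i(I)$ for every $i$. Each $v_i$ has a center $\p_i\in V(I)\subset\operatorname{Spec}(R)$, namely the contraction to $R$ of its center on the integral closure of the Rees algebra $R[It]$. Under this correspondence, the prime $\p_i$ is the contraction of a minimal prime of $G_I(R)$. By the discussion preceding the theorem, in an equidimensional and universally catenary Noetherian local ring, contractions of minimal primes of $G_I(R)$ coincide with $L(I)$, so $\p_i\in L(I)$ for every $i$.

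To conclude, fix a Rees valuation $v=v_i$ with center $\p=\p_i\in L(I)$ and any $j\in J$. After localizing at $\p$, the valuation $v$ becomes a Rees valuation of $I_\p$ (the integral closure of the Rees algebra commutes with localization), so the hypothesis $J_\p\subset\overline{I_\p}$ yields $v(j)\ge v(I)$. As this holds for every Rees valuation of $I$, we obtain $J\subset\overline{I}$, i.e., $J$ is integral over $I$.

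The essential step, and the main obstacle, is the identification of the centers of the Rees valuations of $I$ with the primes in $L(I)$. This rests on the characterization of $L(I)$ as the set of contractions of minimal primes of $G_I(R)$, which requires the equidimensional and universally catenary hypothesis (and which is exactly the content of McAdam's theorem cited as \cite[3.9 and 4.1]{Mc}). Once this is in hand, the Rees valuation criterion reduces the global statement about $J\subset\overline{I}$ to its local form at the finitely many primes of $L(I)$.
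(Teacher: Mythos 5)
Your overall architecture (valuative criterion for $\overline{I}$, plus locating the centers of the Rees valuations inside $L(I)$) can be made to work, but as written there is a genuine gap at the load-bearing step. You assert, as if it were part of the standard correspondence, that the center $\p_i$ of a Rees valuation of $I$ ``is the contraction of a minimal prime of $G_I(R)$.'' That is not a formal feature of the construction: the Rees valuations arise from the minimal primes of $I\overline{\mathcal R(I)}$ (equivalently, of $t^{-1}$ in the normalization of the extended Rees algebra), and there is no a priori reason that such a prime contracts to a prime that is \emph{minimal} over $I\mathcal R(I)$ --- heights can drop only in one direction under integral extensions, and descending minimality here is exactly where the equidimensional and universally catenary hypothesis must be used (via Ratliff's dimension formula, or equivalently McAdam's theorem that the asymptotic prime divisors $\overline{A}^*(I)$, which are precisely the centers of the Rees valuations, coincide with $L(I)$ in the quasi-unmixed case). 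The justification you offer at the end --- that $L(I)$ equals the set of contractions of minimal primes of $G_I(R)$, citing \cite[3.9 and 4.1]{Mc} --- addresses only the second half of your chain and does not bridge from ``center of a Rees valuation'' to ``contraction of a minimal prime of $G_I(R)$.'' The omission is not cosmetic: if your claim held for all Noetherian local rings, the theorem itself would hold without the quasi-unmixedness hypothesis, and it does not (in Nagata-type examples the maximal ideal is an embedded prime of $\overline{I}$, equivalently the center of a Rees valuation, while $\ell(I_\m)<{\rm ht}\,\m$, so $\m\notin L(I)$ and deleting the $\m$-primary component of $\overline{I}$ produces $J\supset I$ integral over $I$ locally at every $\p\in L(I)$ but not globally).

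Once the key inclusion is correctly attributed --- either ``centers of Rees valuations lie in $\overline{A}^*(I)$, and $\overline{A}^*(I)\subset L(I)$ by \cite[4.1]{Mc}'' or, more economically, the fact the paper itself uses, namely that every associated prime of $\overline{I}$ lies in $L(I)$ --- your valuative argument (localization of Rees valuations at $\p\in L(I)$, $v(I_\p)=v(I)$, and the criterion $J\subset\overline{I}$ iff $v(J)\ge v(I)$ for all Rees valuations) does go through, modulo the usual care when $I$ is contained in a minimal prime of $R$. At that point your proof is essentially the paper's: the paper simply checks the containment $J\subset\overline{I}$ at the associated primes of $\overline{I}$ and quotes McAdam for $\operatorname{Ass}(R/\overline{I})\subset L(I)$, which avoids invoking the Rees-valuation machinery altogether.
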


\begin{proof} Let $\overline I$ denote the integral closure of $I$. It suffices to prove that  
$J \subset \overline I$ if $J_{\p} \subset ({\overline I})_{\p} \, $ for every $\p \in L(I)$. This follows because every associated prime of $\, \overline I$ belongs to $L(I)$ by \cite[3.9 and 4.1]{Mc}.
\end{proof}

We will use Theorem \ref{Mc} to prove Conjecture \ref{Q}.
The main idea is to replace the ideals $I \subset J$ by ideals $I^* \subset J^*$ in a new local ring $S$ which contains an element $t$ such that 
$I^*_\p = J^*_\p$ if $t \not\in \p$ and to use Theorem \ref{MT} to show that the condition $c_i(I, R)  = c_i(J,R)$ forces $t \not\in \p$ for every $\p \in L(I^*)$. Then $J^*$ is integral over $I^*$ by Theorem \ref{Mc}, which implies that $J$ is integral over $I$.

\smallskip
  
\begin{theorem}[\bf Integral Dependence]\label{ID}  
Let $R$ be an equidimensional and universally catenary Noetherian local ring of dimension $d$ and let $I \subset J$ be ideals.
The following are equivalent$\,:$
\begin{enumerate}[$(1)$]
\item $c_i(I)\le c_i(J) \quad \mbox{for \ \ } 0\le i \le d\, ;$
\item $c_i(I)= c_i(J) \quad \mbox{for \, } 0\le i \le d \, ;$
\item $J$ is integral over $I .$ 
\end{enumerate}
\end{theorem}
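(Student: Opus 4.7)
The equivalences $(2) \Leftrightarrow (1)$ and $(3) \Rightarrow (2)$ are elementary or known (the latter is Ciuperca's theorem cited in the introduction), so the crux is $(1) \Rightarrow (3)$, and the plan follows the blueprint sketched in the paragraph preceding the theorem: deform $I$ into $J$, apply the key technical result (Theorem \ref{MT}) to control the degeneration, and then pass from the deformed ring back to $R$.

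I set $S := R[t]_{(\m, t)}$ for $t$ an indeterminate and introduce the deformed ideals $I^* := I + tJ \subset J^* := JS$ in $S$. Then $S$ is equidimensional and universally catenary of dimension $d+1$, $t$ is a non zerodivisor on $S$, and $S/(t) \cong R$ with $I^*/(t) = I$ and $J^*/(t) = J$. The key geometric property is that $I^*$ and $J^*$ agree away from $V(t)$: for $\p \in \Spec S$ with $t \notin \p$, the element $t$ is a unit in $S_\p$, so $(tJ)S_\p = JS_\p$ and hence $I^*_\p = (I + tJ)S_\p = J_\p = J^*_\p$. Because integral dependence descends along the quotient by a non zerodivisor, it suffices to show that $J^*$ is integral over $I^*$ in $S$; by Theorem \ref{Mc} applied in $S$, it in turn suffices to show that $t \notin \p$ for every $\p \in L(I^*)$.

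That every $\p \in L(I^*)$ avoids $t$ is the conclusion of Theorem \ref{MT} when applied to $I^* \subset S$ with $r = 1$ and $\underline{t} = t$, provided its three hypotheses hold. After standard reductions (the case of $I$ nilpotent degenerates because hypothesis $(1)$ together with the $c_0$ computation in an equidimensional ring forces $J$ to be nilpotent as well; in the remaining case one may mod out by $0:J^\infty$ to arrange that $J$ has positive grade), the element $t$ is part of a system of parameters of $S$, and $\htt(t, I^*, 0:{I^*}^\infty) > 1$ because modulo $t$ the relevant ideal still has positive height and $S$ is equidimensional and universally catenary. The remaining hypothesis of Theorem \ref{MT} is the inequality
$$c_i(I^*, S/(t)) \le c_i(I^*, S) \qquad \mbox{for } 1 \le i \le d.$$
Since $S/(t) = R$ and $I^* S/(t) = I$, the left-hand side is just $c_i(I, R)$. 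Using hypothesis $(1)$, it therefore suffices to prove
$$c_i(J, R) \le c_i(I^*, S) \qquad \mbox{for } 1 \le i \le d,$$
and this is where the length formula of Proposition \ref{Lformula} does the work: any prime $\p$ of $S$ of height $i$ containing $I^*$ with $t \notin \p$ automatically contains $J$ (as $tJ \subset I^* \subset \p$ and $t \notin \p$). Among such primes, the extensions $\q S$ of primes $\q \in V(J)$ of height $i$ in $R$ contribute, by the polynomial-extension invariance of Hilbert--Samuel multiplicity and the compatibility of general elements with flat extensions (Lemma \ref{general}), exactly the sum defining $c_i(J, R)$, while any remaining primes contribute nonnegatively.

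Once Theorem \ref{MT} yields $t \notin \p$ for every $\p \in L(I^*)$, the construction gives $I^*_\p = J^*_\p$ at those primes, Theorem \ref{Mc} delivers that $J^*$ is integral over $I^*$ in $S$, and reduction modulo $t$ concludes that $J$ is integral over $I$ in $R$. The step I expect to be the main obstacle is the inequality $c_i(J, R) \le c_i(I^*, S)$: even granting that general elements of $I^*$ can be written as $u + tv$ with $u \in I$ and $v \in J$ general, one must match the colon conditions of the length formula for $c_i(J, R)$ to the corresponding colon conditions in $S$ for $c_i(I^*, S)$, and verify that the extensions $\q S$ of the relevant primes $\q \in V(J)$ appear with the correct local length and that the other height-$i$ primes of $S$ containing $I^*$, in particular those containing $t$, contribute nonnegatively rather than subtractively.
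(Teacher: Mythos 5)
The overall construction you use is the paper's: pass to $S=R[t]_{(\m,t)}$, set $H=IS+tJS\subset JS$, apply Theorem~\ref{MT} with $r=1$ to conclude $t\notin\p$ for all $\p\in L(H)$, invoke Theorem~\ref{Mc}, and specialize modulo $t$. The genuine gap is the inequality $c_i(J,R)\le c_i(H,S)$ for $1\le i\le d$, which you yourself flag as ``the main obstacle'' and only sketch via a term-by-term comparison in the length formula of Proposition~\ref{Lformula}. As written this is not a proof: general elements of $H$ are combinations of the generators of $I$ together with $t$ times generators of $J$, so their coefficient vectors on the natural generating set involve $t$; one must then show that at each prime $\q S$ (where $S_{\q S}\cong R_\q(t)$ and $H$ becomes $J$) these elements are still sufficiently general, that the colon condition $\q S\supset(x_1,\dots,x_{i-1}):_S H^{\infty}$ matches the condition $\q\supset(y_1,\dots,y_{i-1}):_R J^{\infty}$ for general elements of $J$ in $R$, and that the local lengths agree under the extension $R_\q\to R_\q(t)$. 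None of this is carried out. The paper avoids the whole issue with a short localization argument you did not use: since $t$ is a unit in $S_{\m S}\cong R(t)$ one has $HS_{\m S}=JS_{\m S}$, the multiplicity sequence is unchanged under the faithfully flat extension $R\to R(t)$, and Proposition~\ref{COR2} (applicable because $S/\m S\cong k[t]_{(t)}$ is analytically unramified) gives $c_i(H,S_{\m S})\le c_i(H,S)$; chaining these yields $c_i(J,R)=c_i(H,S_{\m S})\le c_i(H,S)$.

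A secondary weakness is your preliminary reduction. What Theorem~\ref{MT} needs is $\htt\,(t,H,0:H^{\infty})>1$, which the paper secures by first replacing $R$, $I$, $J$ by $R[y]_{(\m,y)}$, $(I,y)$, $(J,y)$ (Corollary~\ref{COR}(\ref{variable})), so that $\htt\, I\ge 1$ and hence $\htt\,(t,H,0:H^{\infty})\ge\htt\,(t,IS)=1+\htt\, I>1$. Your alternative, factoring out $0:J^{\infty}$, can be made to work, but your justification (``modulo $t$ the relevant ideal still has positive height'') is not an argument: positive grade of $J$ in the quotient does not by itself give $\htt\, I\ge 1$ there. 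You would need to use the $i=0$ inequality $c_0(I)\le c_0(J)$ together with Formula~(\ref{0}) to see that every minimal prime containing $I$ contains $J$, so that after killing $0:J^{\infty}$ no minimal prime contains $I$, and you would also need to check that integral dependence of $J$ over $I$ descends from $R/(0:J^{\infty})$ back to $R$ (which it does, by multiplying an integral equation by a high power of the element in question), as well as that the hypotheses for $i\ge 1$ are preserved via Corollary~\ref{COR}(\ref{modulo}). These points are fixable, but together with the unproved key inequality they leave the proposal incomplete.
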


\begin{proof} 
That (3) implies (2) was proved in \cite[2.7]{C} (see also \cite[11.5]{TV}). Since (2) implies (1), 
we only need to show that (1) implies (3). Write $\m$ for the maximal ideal of $R$.  Replacing $R$ by the localized polynomial ring $R[y]_{(\m,y)}$ and $I$, $J$ by the ideals $(I,y)$, $(J,y)$, we may suppose that ${\rm ht}\ I >0$. According to Corollary~\ref{COR}(\ref{variable}), the inequalities in $(1)$ are preserved. 

Consider the localized polynomial ring $S = R[t]_{(\m,t)}$ and the ideal $H = I S +tJ S \subset J S$.  
One has ${\rm ht}\, (t,H, 0:H^{\infty})\ge {\rm ht}\, (t, I)>1$. 
Notice that 
$\, J \, S_{\m S} = H \,S_{\m S}\,$ because 
$t$ is a unit in $S_{\m S}$. 
For $1\le i \le {\rm dim }\, S-1=d \,$ we obtain
$$c_i(H, S/(t))=c_i(I, R)\le c_i(J, R)=c_i(J, S_{\m S}) = c_i(H, S_{\m S})  \le c_i(H, S) \, ,$$
where the last inequality follows from Proposition~\ref{COR2} since $S/\m S$ is analytically unramified.
By Theorem~\ref{MT}, $t \not\in \p$ for every prime ideal $\p \in L(H)$. 
Therefore, $H_\p = (J S)_\p$ for all such primes $\p$. 
By Theorem~\ref{Mc}, this implies that $J S$ is integral over $H$. Reducing modulo $t$ we see that $J$ is integral over $I$. 
\end{proof}

\begin{remark} 
The idea of considering the ideal $H = I S+tJ S$ in the localized polynomial ring $S = R[t]_{(\m,t)}$ is due to Gaffney and Gassler \cite[proof of 4.9]{GG}. In the analytic set-up, $H$ is a family of ideals parametrized by $t$ with 
$H(0) = I$ and $H(t) = J$ for $t \neq 0$. The assumption $c_i(I)= c_i(J)$ for $0 \le i \le d$ means that the map 
$t \mapsto \left( c_0(H(t)),...,c_d(H(t)) \right)$ is constant. By the principle of specialization of integral dependence proved by Gaffney and Gassler \cite[4.7]{GG}, this implies that $J$ is integral over $I$. Their proof of the principle of specialization of integral dependence in the analytic case is intricate.
\end{remark}

Our approach can  also be used to prove the following principle of specialization of  integral dependence (PSID) for arbitrary ideals.

\begin{theorem}[\bf PSID]\label{PSID}\ 
Let $\varphi: T \to R $ be a local homomorphism of Noetherian local rings. Assume that
$T$ is regular with residue field $k$ and quotient field $L$, that $R$ is equidimensional and universally catenary, 
and that $\dim \, k \otimes_TR = \dim R - \dim T$. Further suppose that there is a homomorphism of rings $\, \psi: R \to T$ with $\, \psi \varphi={\rm id}$
and write $\wp = {\rm ker} \, \psi$. 
Let $I$ be an ideal of $R$ such that $\, {\rm ht} \  I \, (k\otimes_TR) >0$ and let $J \supset I$ be another ideal.

If $\, L\otimes_TJ$ is integral over $L\otimes_TI$ and
$$c_i(I,k\otimes_TR) \leq c_i(I,R_\wp) \quad \mbox{ for  } \ 1\le i \le \dim \, k\otimes_TR\, ,$$ 
then $J$ is integral over $I$.
\end{theorem}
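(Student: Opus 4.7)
The plan is to run the strategy of Theorem~\ref{ID} with a regular system of parameters $\underline{t} = t_1, \ldots, t_r$ of $T$ (embedded in $R$ via $\varphi$, so $\underline{t} \subset \m$) playing the role of the single auxiliary variable $t$ there. First I would reduce to the case $r := \dim T \ge 1$, since when $r = 0$ one has $T = L$ and the generic-fiber hypothesis is already the conclusion. The key construction is then the ideal $H := I + \underline{t} J \subset J$, which satisfies $H \equiv I \pmod{\underline{t}}$ and, because $\psi\varphi = \id$ forces $\underline{t} \not\subset \wp$, also $HR_\wp = IR_\wp + JR_\wp = JR_\wp$. The target is to invoke Theorem~\ref{MT} for $H$ and $\underline{t}$ in $R$, and then pass back from $J$ integral over $H$ to $J$ integral over $I$.

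To apply Theorem~\ref{MT} I would verify three hypotheses. That $\underline{t}$ is part of a system of parameters of $R$ is immediate from $\dim R/\underline{t}R = \dim k\otimes_T R = d - r$. The condition $\htt(\underline{t}, H, 0:H^\infty) > r$ follows from $(\underline{t}, H) = (\underline{t}, I)$ together with the fact that, by equidimensionality and catenarity, every minimal prime of $(\underline{t})$ has height exactly $r$, so the assumption $\htt I(k\otimes_T R) > 0$ gives $\htt(\underline{t}, I) > r$. The substantive condition is $c_i(H, R/\underline{t}) \le c_i(H, R)$ for $1 \le i \le d-r$, and this is where both hypotheses of the theorem enter. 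Its left side equals $c_i(I, k\otimes_T R)$ because $H$ and $I$ coincide modulo $\underline{t}$. For the right side, Proposition~\ref{COR2} at $\wp$ (applicable because $R/\wp \cong T$ is regular, hence analytically unramified) reduces matters to $c_i(H, R_\wp)$. Now $R_\wp$ is naturally an $L$-algebra, since $\varphi$ sends $T\setminus 0$ into $R\setminus\wp$, so the generic-fiber integrality of $L\otimes_T J$ over $L\otimes_T I$ descends to $JR_\wp$ being integral over $IR_\wp$; combined with $HR_\wp = JR_\wp$ and Theorem~\ref{ID} applied at $\wp$, this yields $c_i(H, R_\wp) = c_i(I, R_\wp)$. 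Chaining with the numerical hypothesis $c_i(I, k\otimes_T R) \le c_i(I, R_\wp)$ produces the required inequality.

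Theorem~\ref{MT} then delivers that $\underline{t}$ is part of a system of parameters of $R/\p$ for every $\p \in L(H)$. Since $r \ge 1$, this forces $\underline{t} \not\subset \p$, whence $HR_\p = JR_\p$ and $J_\p$ is trivially integral over $H_\p$. Theorem~\ref{Mc} then gives $J$ integral over $H$, which means $J^{n+1} = HJ^n = IJ^n + \underline{t} J^{n+1}$ for $n \gg 0$. The finitely generated module $M := J^{n+1}/IJ^n$ satisfies $M = \underline{t} M$, so Nakayama's Lemma (applicable since $\underline{t} \subset \m$) forces $M = 0$; hence $J^{n+1} = IJ^n$ and $J$ is integral over $I$. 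I expect the main obstacle to be the comparison in the second paragraph: everything hinges on choosing $H$ so that it collapses to $I$ modulo $\underline{t}$ and to $J$ after localizing at $\wp$, and on orchestrating Proposition~\ref{COR2}, Theorem~\ref{ID} at $\wp$, the generic-fiber integrality, and the numerical hypothesis on the special fiber into a single coherent chain of inequalities.
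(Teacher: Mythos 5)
Your proposal is correct, but it takes a genuinely different route from the paper's. The paper applies Theorem~\ref{MT} directly to $I$ itself, using the chain $c_i(I,R/(\underline t))=c_i(I,k\otimes_TR)\le c_i(I,R_\wp)\le c_i(I,R)$ (the last inequality by Proposition~\ref{COR2}); the conclusion of Theorem~\ref{MT} then says $\underline t$ is part of a system of parameters modulo every $\p\in L(I)$, and a height comparison (Krull's altitude theorem plus $T$ being a domain) forces $\varphi^{-1}(\p)=0$, so each $R_\p$ is a localization of the generic fiber ring $L\otimes_TR$, where integrality holds by hypothesis; Theorem~\ref{Mc} applied to $I\subset J$ finishes. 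You instead run the specialization trick of Theorem~\ref{ID} relatively over the base: you form $H=I+(\underline t)J$ inside $R$, with the regular system of parameters $\underline t$ playing the role of the adjoined variable $t$, verify the hypotheses of Theorem~\ref{MT} for $H$ (your height computation and the identification $H(R/(\underline t))=I(k\otimes_TR)$ are right, and the needed equality $c_i(H,R_\wp)=c_i(I,R_\wp)$ follows, as you say, from $HR_\wp=JR_\wp$, the localized generic-fiber integrality, and the implication $(3)\Rightarrow(2)$ of Theorem~\ref{ID}, i.e.\ Ciuperc\u{a}'s result, valid in $R_\wp$ since it is again equidimensional and universally catenary), conclude $\underline t\not\subset\p$ hence $H_\p=J_\p$ for $\p\in L(H)$, get $J$ integral over $H$ from Theorem~\ref{Mc}, and descend to $J$ integral over $I$ by Nakayama from $J^{n+1}=IJ^n+(\underline t)J^{n+1}$. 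Both arguments rest on the same three pillars (Theorem~\ref{MT}, Proposition~\ref{COR2}, Theorem~\ref{Mc}); yours makes the PSID literally a family-interpolation argument and is self-contained modulo Ciuperc\u{a}, while the paper's is leaner (no auxiliary ideal, no appeal to $(3)\Rightarrow(2)$, no Nakayama step) and, because it only uses the hypothesis through $c_i(I,k\otimes_TR)\le c_i(I,R)$, it also yields the remark following the theorem that this weaker inequality suffices — a byproduct your route does not give, since you genuinely need the comparison at $R_\wp$ to convert the generic-fiber integrality into $c_i(H,R_\wp)=c_i(I,R_\wp)$.
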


Notice that $R_\wp = (L\otimes_TR)_{L\otimes_T\wp}$, where $L \otimes_T\wp$ is a prime ideal of $L \otimes_TR$, because $\varphi^{-1}(\wp)=0$. 
Here $L \otimes_TR$ is the ring of the generic fiber of $\varphi$. Thus the PSID above says, in particular, that $J$ is integral over $I$ on the total space of the family, if it is so on the
generic fiber and if the multiplicity sequence of $I$ on the special fiber coincides with the one on the generic fiber locally along the parameter space $V(\wp)$. 

\begin{proof}  
%After a purely transcendental extension of the residue fields of $T$ and $R$, we may assume that the residue field of $R$ is not algebraic over a finite field.  
By Theorem \ref{Mc}, it suffices to show that $J_{\p}$ is integral over $I_{\p}$ for every $\p\in L(I)$. 
To do so, we apply Theorem~\ref{MT}, with ${\underline t} = t_1, \ldots, t_r$  the image in $R$ of a regular system of parameters of $T$. 

Notice that $R/({\underline t}) = k\otimes_TR$. Hence by our hypotheses, $\underline t$ form part of a system of parameters of $R$ and
$\,{\rm ht} \, (\underline{t}, I) > r$. Moreover,
%and that 
$c_i(I, R_\wp) \le c_i(I, R)$ by Proposition~\ref{COR2}
since $R/\wp \cong T$ is analytically unramified.  Thus,
$$c_i(I, R/({\underline t})) \le c_i(I, R)$$
for $1\le i \le \dim \, k\otimes_TR = \dim R -r$. 

Let $\n$ denote the maximal ideal of $T$. Theorem~\ref{MT} implies that $\, {\rm ht}\, \n=r = {\rm ht}\, \n  (R/\p)$
for every $\p\in L(I)$. On the other hand, ${\rm ht}\, \n  (R/\p)\le {\rm ht}\, \n/ \varphi^{-1}(\p)$ since by Krull's Altitude Theorem, the height of the maximal ideal of a Noetherian local ring cannot increase
when extended to a Noetherian extension ring. Thus, $\, {\rm ht}\, \n    \le {\rm ht}\, \n/ \varphi^{-1}(\p)$. We deduce that $\varphi^{-1}(\p)=0$ as $T$ is a domain. In other words, $R_{\p}$ is  a localization of $L\otimes_TR$, and so $J_{\p}$ is integral over $I_{\p}$ by assumption. 
\end{proof}

\vspace{0.0in}

\begin{remark}
This proof shows that the conclusion of Theorem \ref{PSID} holds with the weaker, though geometrically less significant, hypothesis that $\, c_i(I,k\otimes_TR) \leq c_i(I,R)$ for $1\le i \le \dim \, k\otimes_TR$.  
\end{remark}

\smallskip

\section{Multiplicity sequence and local $j$-multiplicities}

In this section we discuss the relationship between the multiplicity sequence and the $j$-multiplicity of an ideal with respect to integral dependence.

Let $(R,\m)$ be a Noetherian local ring of dimension $d$ and $I$ an ideal in $R$.
Let $G := \oplus_{n \ge 0}\, I^n/I^{n+1}$ be the associated graded ring of $I$. 
The {\em $j$-multiplicity} of $I$ was introduced by Achilles and Manaresi \cite{AM} as the invariant
$$j(I) := \sum_{\p \in V(\m G),\ \dim G/\p = d} \lambda(G_{\p})\cdot e(G/\p) \, .$$
It can be also interpreted as the multiplicity of the graded module $H_\m^0(G)$ \cite[Section 6.1]{FOV}.

Note that there exists $\p \in V(\m G)$ with $\dim G/\p = d \,$ if and only if $\dim G/\m G = d$.
Since $F(I) = G/\m G$ for $I\not=R$ and $\ell(I) = \dim F(I)$, it follows that $j(I) \neq 0$ if and only if $\ell(I) = d$ and $I\not=R$.
Thus, the $j$-multiplicity of $I$ is supported precisely on the set $L(I)$, meaning that $L(I)=\{  \p\in{\rm Spec}(R) \mid   j(I_{\p})\neq 0\}$. 

The $j$-multiplicity can be considered as a generalized Hilbert-Samuel multiplicity, because $j(I) = e(I,R)$ when $I$ is an $\m$-primary ideal. In general, we have $j(I) = c_d(I)$  \cite[2.4(ii) and 2.3(i)]{AM97}.
 
It follows from the work of Flenner and Manaresi \cite[3.3]{FM} that two arbitrary ideals $I \subset J$ in an equidimensional and universally catenary Noetherian local ring have the same integral closure if and only if $j(I_\p) = j(J_\p)$ for all $\p \in L(I)$.
This result can be strengthened as follows.

Let $N := \{n(\p)|\ \p \in L(I)\}$ be a given set of positive integers. For $0 \le i \le d$, we define 
$$c_i^N(I) :=  \sum_{\p \in L(I),\ \dim R/\p = d-i} j(I_\p)\cdot n(\p) \, .$$
The idea is to encode all local $j$-multiplicities $j(I_\p)$ in a given dimension by means of a single invariant.
For instance,  
$$c_i^N(I) =  \sum_{\p \in L(I),\ \dim R/\p = d-i} j(I_\p)$$
if $n(\p) = 1$ for all $\p \in L(I)$. Recall that $j(I_\p) = 0$ if $\p \not\in L(I)$.

\vspace{0.03in}

\begin{theorem} \label{N}
Let $R$ be an equidimensional and universally catenary Noetherian local ring of dimension $d$ and let $I \subset J$ be ideals.
The following are equivalent$\,:$
\begin{enumerate}[$(1)$]
\item $c_i^N(I)\le c_i^N(J) \quad \mbox{for \ } 0\le i \le d \, ;$
\item $c_i^N(I)= c_i^N(J) \quad \mbox{for \ } 0\le i \le d \, ;$
\item  $J$ is integral over $I .$
\end{enumerate}
\end{theorem}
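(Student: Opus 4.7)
The strategy is to follow the blueprint of Theorem \ref{ID}, substituting the refined invariant $c_i^N$ for the multiplicity sequence $c_i$ throughout. The implication $(3) \Rightarrow (2)$ follows from the Flenner--Manaresi criterion recalled above: if $J$ is integral over $I$, then $L(I) = L(J)$ and $j(I_\p) = j(J_\p)$ for every $\p \in L(I)$, so the sums defining $c_i^N(I)$ and $c_i^N(J)$ coincide term by term. The implication $(2) \Rightarrow (1)$ is immediate, so the main content is $(1) \Rightarrow (3)$.

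For this, I would first reduce to the case $\htt I > 0$ by replacing $R$ by the localized polynomial ring $R[y]_{(\m,y)}$ and the ideals $I, J$ by $(I,y), (J,y)$: the primes of $L((I,y))$ in $R[y]_{(\m,y)}$ correspond via $\p \mapsto (\p, y)$ to those of $L(I)$ in $R$, with matching local $j$-multiplicities, so the hypothesis and conclusion are preserved under an $N$-analog of Corollary \ref{COR}(\ref{variable}). I would then pass to the set-up $S = R[t]_{(\m, t)}$, $H = IS + tJS$ from the proof of Theorem \ref{ID}. Using the identifications $HS/(t) = I$ and $HS_{\m S} = JS_{\m S}$ (since $t$ is a unit in $S_{\m S}$), together with an $N$-analog of Proposition \ref{COR2} (valid because $S/\m S$ is analytically unramified), the hypothesis yields the chain
\[
c_i^N(H, S/(t)) = c_i^N(I, R) \le c_i^N(J, R) = c_i^N(H, S_{\m S}) \le c_i^N(H, S)
\]
for $1 \le i \le d$. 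By Theorem \ref{Mc}, it would then suffice to show $t \notin \p$ for every $\p \in L(H)$, since $(JS)_\p = H_\p$ whenever $t \notin \p$; then $JS$ is integral over $H$, and reducing modulo $t$ yields $J$ integral over $I$.

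The heart of the proof is therefore an $N$-weighted analog of Theorem \ref{MT}: the assertion that $c_i^N(H, S/(t)) \le c_i^N(H, S)$ for $1 \le i \le d$ forces $t$ to avoid every prime in $L(H)$. This is the step I expect to be the main obstacle, because the chain of inequalities in the proof of Theorem \ref{MT} rests on Lemma \ref{SL}, Lemma \ref{sop}, and the associativity formula, all of which carry the intrinsic weights $e(R/\p)$ and $e(R/\q)$ that do not immediately combine with the arbitrary positive weights $n(\p)$. My plan for overcoming this is to restrict attention throughout to primes in $L(H)$: the sets $\Lambda$ and $\Gamma_\p$ appearing in the proof of Theorem \ref{MT} consist of minimal primes over ideals of the form $(\underline t, H)$ or $H$ in appropriate localizations and hence automatically lie in $L(H)$, so each relevant Hilbert--Samuel contribution can be re-expressed as a local $j$-multiplicity and paired consistently with the corresponding $n(\p)$ factor in place of $e(R/\p)$ at every stage of the chain. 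Once this $N$-analog of Theorem \ref{MT} is in place, the remainder of the argument transfers verbatim from the proof of Theorem \ref{ID}.
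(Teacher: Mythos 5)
Your reduction steps ((3)$\Rightarrow$(2), (2)$\Rightarrow$(1), the passage to $R[y]_{(\m,y)}$, and the framing via $S=R[t]_{(\m,t)}$, $H=IS+tJS$, Theorem~\ref{Mc}) are fine, but the entire weight of the argument rests on the step you yourself flag as the obstacle: an $N$-weighted analog of Theorem~\ref{MT}. That step is not established by your sketch, and there are concrete reasons it cannot be obtained by ``re-expressing each Hilbert--Samuel contribution as a local $j$-multiplicity.'' First, the chain of inequalities in the proof of Theorem~\ref{MT} is glued together by Lemma~\ref{sop}, the associativity formula, and (in Proposition~\ref{COR2}) Nagata's inequality $e((R/\q)_\p)\le e(R/\q)$; these statements intertwine the \emph{intrinsic} weights $e(R/\p)$, $e(R/\q)$, $e(R)$, and their analogs with arbitrary positive integers are simply false -- e.g.\ the weighted version of Lemma~\ref{sop}, $\sum_{\p\in\Min(\underline t)}e((\underline t),(R/\q)_\p)\cdot n(\p)\ge n(\q)$, fails as soon as $n(\q)$ is large and the $n(\p)$ are $1$. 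Second, the invariant $c_i^N$ is only defined once a weight function on $L(I)\subset\Spec(R)$ is chosen; there is no canonical way to transport these weights to the primes of $L(H)\subset\Spec(S)$, to the primes appearing after localization in Proposition~\ref{COR2}, or to the primes of $L(I\,(R/(x)))$ after slicing by a general element, so even the bookkeeping identities you need ($N$-analogs of Corollary~\ref{COR}(\ref{grade}),(\ref{section}),(\ref{variable}) and of Proposition~\ref{COR2}) are not available. Indeed, the phenomenon blocking the slicing step is exactly the one the paper isolates in Section~5: cutting by general elements creates components that are not in $L$ (non-rational components), which is why $c_i^*(I)<c_i(I)$ can occur (the example with $c_3^*(I)=11<c_3(I)=18$), and why the authors state explicitly that they could not deduce Theorem~\ref{N} from Theorem~\ref{ID} or vice versa.

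The paper's own proof takes a completely different (and much shorter) route: it observes that the case $n(\p)=1$ for all $\p\in L(I)$ is exactly the criterion of Ulrich and Validashti \cite[3.4]{UV08}, and that their proof goes through verbatim with arbitrary positive weights $n(\p)$. So the intended argument does not pass through Theorem~\ref{MT} at all; if you want a self-contained proof along your lines, you would need a genuinely new replacement for the multiplicity-theoretic lemmas above, not a reweighting of them.
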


\begin{proof}
The case  where $n(\p) = 1$ for all $\p \in L(I)$ was already proved by Ulrich and Validashti \cite[3.4]{UV08}.
Their proof also works in the general case. 
\end{proof}

We could not deduce Theorem \ref{N} from Theorem \ref{ID} and vice versa. It would be of interest to understand why the condition $c_i^N(I)= c_i^N(J)$ for $\, 0\le i \le d \,$ is equivalent to the condition $c_i(I) = c_i(J)$ for $0\le i \le d$. 

We now consider the case where $n(\p) = e(R/\p)$  for all $\p \in L(I)$. Define 
$$c_i^*(I) := \sum_{\p \in L(I),\ \dim R/\p = d-i} j(I_\p)\cdot e(R/\p).$$ 
The remainder of this section is devoted to the comparison between $c_i^*(I)$ and $c_i(I)$.

\vspace{0.03in}

\begin{lemma} \label{minor}
Let $R$ be a Noetherian local ring of dimension $d$ and $I$ an ideal. Then $c_i^*(I) \ge c_i(I)\, $ for $\, i \le d - \dim \, R/I$.
\end{lemma}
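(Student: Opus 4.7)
The plan is to apply the Length Formula of Proposition~\ref{Lformula} to $c_i(I)$ and compare its summands one by one with those of $c_i^*(I)$. First I would reduce to the case $i = d - \dim R/I$: for $i < d - \dim R/I$, the summation for either invariant is indexed by primes $\p \supset I$ with $\dim R/\p = d-i > \dim R/I$, which is impossible, so both sides vanish. After a faithfully flat residue-field extension, as in the proof of Corollary~\ref{COR}(\ref{modulo}), one may further assume the residue field of $R$ is not algebraic over a finite field, so that Lemma~\ref{general} is applicable.

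Next I would choose $x_1,\ldots, x_i$ to be general elements of $I$ that simultaneously remain general in $I_{\p}$ at each of the finitely many primes $\p \supset I$ with $\dim R/\p = d-i$; this is precisely the content of Lemma~\ref{general}. Any such $\p$ contributing to $c_i(I)$ satisfies $\dim R/\p = d - i = \dim R/I$, and a standard chain argument forces $\p$ to be a minimal prime of $I$. Consequently $I_{\p}$ is $\p R_{\p}$-primary, $\ell(I_{\p}) = \dim R_{\p} = {\rm ht}\, \p \le i$, and $\p \in L(I)$ automatically. The key dichotomy is then the following: if ${\rm ht}\, \p < i$, then $x_1,\ldots, x_{{\rm ht}\, \p}$ already generate a minimal reduction of $I_{\p}$, so $(x_1,\ldots, x_{i-1})R_{\p}$ contains a power of $I_{\p}$, which forces $(x_1,\ldots, x_{i-1})R_{\p}:I^{\infty}R_{\p} = R_{\p}$ and excludes $\p$ from the global sum. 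If instead ${\rm ht}\, \p = i$, then applying Proposition~\ref{Lformula} locally to $R_{\p}$ and $I_{\p}$ gives a single summand, indexed by $\p R_{\p}$ (whose residue field has multiplicity one), so the length appearing in the global formula at $\p$ equals $c_i(I_{\p}, R_{\p}) = j(I_{\p})$.

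With these identifications I would conclude
$$c_i(I) \;=\; \sum_{\substack{\p \in L(I)\\ \dim R/\p = d-i\\ {\rm ht}\, \p = i}} j(I_{\p})\, e(R/\p) \;\le\; \sum_{\substack{\p \in L(I)\\ \dim R/\p = d-i}} j(I_{\p})\, e(R/\p) \;=\; c_i^*(I),$$
since the right-hand sum differs from the left only by non-negative contributions from primes in $L(I)$ of height strictly less than $i$. The main obstacle I anticipate is the simultaneous genericity requirement: one single sequence $x_1,\ldots, x_i$ must be general both in $I$, so that Proposition~\ref{Lformula} applies globally, and in each relevant $I_{\p}$, so that Proposition~\ref{Lformula} also applies locally with the same sequence. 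This is precisely what Lemma~\ref{general} and the residue-field extension trick from Section~2 are designed to provide.
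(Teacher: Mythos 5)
Your argument is correct, but it takes a genuinely different (and longer) route than the paper. The paper's proof is a two-line index-set comparison: for $i \le d - \dim R/I$ every $\p \in V(I)$ with $\dim R/\p = d-i$ is automatically in $\Min(I) \subset L(I)$, so $j(I_\p) = e(I,R_\p)$ and $c_i^*(I) = \sum_{\p \in V(I),\, \dim R/\p = d-i} e(I,R_\p)\cdot e(R/\p)$; it then simply quotes \cite[2.3(iii) and 2.3(i)]{AM97} for the formula $c_i(I) = \sum_{\p \in V(I),\, \dim R/\p = d-i,\ {\rm ht}\, \p = i} e(I,R_\p)\cdot e(R/\p)$, and the inequality follows because the second sum runs over a subset of the first, with no general elements, no residue-field extension, and no Lemma~\ref{general}. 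You instead re-derive that quoted formula from the Length Formula of Proposition~\ref{Lformula}: you impose simultaneous genericity at the finitely many relevant primes via Lemma~\ref{general}, show primes of height $<i$ contribute $0$ because $i-1 \ge \ell(I_\p)$ general elements generate a reduction of $I_\p$ (so the colon becomes the unit ideal after localizing), and identify the length at a height-$i$ prime with $c_i(I_\p)=j(I_\p)$ by localizing Proposition~\ref{Lformula} --- this is precisely the technique the paper deploys for the reverse inequality in Proposition~\ref{greater}, and your checks (minimality of the contributing primes, $\ell(I_\p)={\rm ht}\,\p$, commutation of $(\underline{x}):I^\infty$ with localization for finitely generated $I$) are all sound. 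The one step you wave at is the reduction to a residue field not algebraic over a finite field: you must know that $c_i^*$ does not increase (in fact is unchanged) under the purely transcendental extension $R \to R[z]_{\m[z]}$, which here is easy because in this range $c_i^*(I)$ is just $\sum e(I,R_\p)\cdot e(R/\p)$ over minimal primes of the right dimension, but it deserves a sentence (the paper makes the analogous remark explicitly in the proof of Proposition~\ref{greater}). The trade-off: your proof is self-contained modulo Proposition~\ref{Lformula} and yields the sharper identity $c_i(I)=\sum_{\p \in L(I),\ \dim R/\p = d-i,\ {\rm ht}\,\p = i} j(I_\p)\cdot e(R/\p)$ in this range, while the paper's is far shorter at the cost of citing the Achilles--Manaresi formulas directly.
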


\begin{proof}
If $\p \in V(I)$ is a prime ideal with $\dim \, R/\p = d-i$ and $i \le d - \dim \, R/I$, then $\dim \, R/\p \ge \dim \, R/I$. 
This implies $\p \in \Min(I)$. Clearly $\Min(I) \subset L(I)$.  So we conclude that the set of primes $\p \in V(I)$ with $\, \dim \, R/\p = d-i\, $ is equal to the set of primes $\p \in L(I)$ with $\, \dim \, R/\p = d-i$.
%For $\p \in \Min(I)$, 
Since every such $\p$ is in $\Min(I)$, we also have $j(I_\p) = e(I, R_\p)$.
Therefore,
$$c_i^*(I) = \sum_{\p \in V(I),\ \dim R/\p = d-i} e(I,R_\p)\cdot e(R/\p).$$ 
On the other hand according to \cite[2.3(iii) and 2.3(i)]{AM97},
$$c_i(I) = \sum_{\substack{\p\in V(I), \ \dim R/\p = d-i\\  {\rm ht}\, \p = i}} e(I,R_\p)\cdot e(R/\p).$$
\end{proof}

\begin{proposition} \label{greater}
Let $R$ be an equidimensional and universally catenary Noetherian local ring and $I$ an ideal. Then
$c_i^*(I) \le c_i(I)\, $ for $i \ge 0$, and equality holds for $\, i \le {\rm ht} \ I$.
\end{proposition}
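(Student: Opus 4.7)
The plan is to compute $c_i(I)$ and each local $j$-multiplicity $j(I_\p)$ (for $\p \in L(I)$ with $\dim R/\p = d-i$) by applying the Length Formula for Segre Numbers (Proposition~\ref{Lformula}, or its height-indexed form in Remark~\ref{htt}) to both $R$ and the localizations $R_\p$ using a common choice of general elements, and then to match contributions. Since $R$ is equidimensional and universally catenary, each $R_\p$ is also equidimensional and catenary, and $\dim R/\p = d-i$ forces ${\rm ht}\, \p = i = \dim R_\p$; in particular $j(I_\p) = c_{\dim R_\p}(I_\p) = c_i(I_\p)$. Following the device used in the proof of Proposition~\ref{COR2}, I first perform a purely transcendental residue field extension so that the residue field of $R$ is not algebraic over a finite field, an operation which changes neither $c_i(I)$ nor $c_i^*(I)$. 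Since $L(I)$ is finite, Lemma~\ref{general} produces elements $x_1, \ldots, x_i \in I$ that are simultaneously general in $I$ and in each $I_\p$ for $\p \in L(I)$ of height $i$.

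For such a $\p$, the equality $\ell(I_\p) = i$ means that minimal reductions of $I_\p$ require exactly $i$ generators, so $x_1, \ldots, x_{i-1}$ cannot generate a reduction of $I_\p$. Equivalently, $((x_1, \ldots, x_{i-1}) : I^\infty) R_\p \ne R_\p$, so $(x_1, \ldots, x_{i-1}) : I^\infty \subset \p$, and $\p$ appears in the Length Formula for $c_i(I)$ with contribution
$$\lambda\!\left(\frac{R_\p}{(x_1, \ldots, x_{i-1}) R_\p : I^\infty R_\p + x_i R_\p}\right) \cdot e(R/\p).$$
Applying the Length Formula inside the $i$-dimensional local ring $R_\p$ to compute $c_i(I_\p) = j(I_\p)$ with the same $x_1, \ldots, x_i$, the only height-$i$ prime of $R_\p$ is $\p R_\p$, whose residue field has multiplicity $1$, so
$$j(I_\p) = \lambda\!\left(\frac{R_\p}{(x_1, \ldots, x_{i-1}) R_\p : I^\infty R_\p + x_i R_\p}\right).$$
Thus the $\p$-contribution to $c_i(I)$ is exactly $j(I_\p) \cdot e(R/\p)$, and summing over $\p \in L(I)$ of height $i$ recovers $c_i^*(I)$.

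Since every other term in the Length Formula for $c_i(I)$, coming from height-$i$ primes of $V(I)$ that lie outside $L(I)$, is nonnegative, the inequality $c_i^*(I) \le c_i(I)$ follows. For the equality claim when $i \le {\rm ht}\, I$, any prime $\p$ of height $i$ containing $I$ is automatically a minimal prime of $I$, so $I_\p$ is $\p R_\p$-primary, giving $\ell(I_\p) = i = {\rm ht}\, \p$ and $j(I_\p) = e(I, R_\p)$; comparing with the closed expression of Proposition~\ref{COR3}(\ref{g}) then yields equality, or equivalently one combines the reverse inequality of Lemma~\ref{minor} with what was just proved. The one delicate point is the simultaneous general choice of $x_1, \ldots, x_i$ across the finite set of localizations $R_\p$ for $\p \in L(I)$ of height $i$; once Lemma~\ref{general} supplies such elements, everything reduces to matching terms in two applications of the Length Formula.
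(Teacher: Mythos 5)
Your overall route is exactly the paper's: after a purely transcendental residue field extension you use Lemma~\ref{general} to choose $x_1,\ldots,x_i$ simultaneously general in $I$ and in $I_\p$ for the finitely many $\p\in L(I)$ of height $i$, identify the $\p$-term of the Length Formula for $c_i(I)$ with $j(I_\p)\cdot e(R/\p)$ via the Length Formula applied in $R_\p$, drop the remaining nonnegative terms, and settle the equality case with Lemma~\ref{minor} (equivalently Proposition~\ref{COR3}(\ref{g})). The one step that does not hold up as written is your justification of the containment $(x_1,\ldots,x_{i-1}):I^{\infty}\subset \p$. You argue: since $\ell(I_\p)=i$, the elements $x_1,\ldots,x_{i-1}$ cannot generate a reduction of $I_\p$, ``equivalently'' $(x_1,\ldots,x_{i-1})R_\p:I^{\infty}R_\p\neq R_\p$. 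That equivalence is false: the saturation being the unit ideal only says that some power $I_\p^{\,n}$ lies in $(x_1,\ldots,x_{i-1})R_\p$, which is strictly weaker than being a reduction. For instance, in $R=k[x,y]_{(x,y)}$ with $I=(x^2,xy)$ one has $I^2\subset (x^2)$, so $(x^2):I^{\infty}=R$, yet $(x^2)$ is not a reduction of $I$ (indeed $\ell(I)=2$). So ``not a reduction'' does not yield a proper saturation, and since the whole identification of the $\p$-term rests on this containment, the step as stated is a genuine gap.

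The containment is nevertheless true for your general elements, and the paper's argument repairs it cleanly: because $\p\in L(I)$ and $I_\p\neq R_\p$, one has $j(I_\p)=c_i(I_\p)\neq 0$; applying Proposition~\ref{Lformula} in the $i$-dimensional ring $R_\p$ with the same $x_1,\ldots,x_i$, the only prime that can occur in the sum is $\p R_\p$, so if $(x_1,\ldots,x_{i-1})R_\p:I^{\infty}R_\p$ were not contained in $\p R_\p$ the sum would be empty and $j(I_\p)$ would vanish, a contradiction. This simultaneously gives the containment and the identity $j(I_\p)=\lambda\bigl(R_\p/((x_1,\ldots,x_{i-1})R_\p:I^{\infty}R_\p+x_iR_\p)\bigr)$, after which your comparison of the two Length Formulas goes through verbatim.
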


\begin{proof}
The second statement follows from the first and Lemma~\ref{minor}. To prove the first statement, we may assume that the residue field of $R$ is 
not algebraic over a finite field, as $c_i^*$ cannot decrease (in fact stays the same) under a purely transcendental residue field extension. 
Let  $x_1,...,x_i$ be general $A$-linear combinations of a finite
generating set of $I$ as in Lemma \ref{general}, and keep in mind that by the same lemma, $x_1, \ldots, x_i$ are general elements of $I$. Write $d={\rm dim} \, R$. 
From Proposition~\ref{Lformula} we have
\begin{equation*} 
c_i(I)=\sum_{\substack{\p\in V(I), \ \dim R/\p = d-i\\
\p \supset (x_1, \ldots,  \,x_{i-1}):\, I^{\infty}}
}  \lambda \left(\frac{R_{\p}}{ (x_1, \ldots, x_{i-1})R_{\p}:I^{\infty}R_{\p} +x_iR_{\p}}\right) \cdot e(R/\p).
\end{equation*}

Consider the prime ideals $\p \in L(I)$ with $\dim R/\p = d-i$.
Since  $R$ is   equidimensional, catenary, and local, we have $\dim R_\p = {\rm ht}\, \p = d - \dim \, R/\p = i$ and therefore $j(I_\p) = c_i(I_\p)$ \cite[2.4(ii)]{AM97}.
As $L(I)$ is a finite set, $x_1,...,x_i$ are also general elements of $I_\p$ according to  Lemma \ref{general}. 
Hence, we can use $x_1,...,x_i$ to compute $j(I_\p)$ by the length formula for $c_i(I_\p)$ of Proposition \ref{Lformula}.
Notice that $j(I_\p) \neq 0$ and that $\p R_\p$ is the unique prime ideal in $R_\p$ with $\dim \, R_\p/\p R_\p = 0$. 
So we must have $\p \supset (x_1, \ldots, x_{i-1}):I^{\infty} $ and 
$$j(I_\p) = c_i(I_\p) =  \lambda \left(\frac{R_{\p}}{ (x_1, \ldots, x_{i-1})R_{\p}:I^{\infty}R_{\p} +x_iR_{\p}}\right).$$
Therefore, 
$$c_i^*(I) = \sum_{\substack{\p\in L(I), \ \dim R/\p = d-i\\
\p \supset (x_1, \ldots,  \,x_{i-1}):\, I^{\infty}}
}   \lambda \left(\frac{R_{\p}}{ (x_1, \ldots, x_{i-1})R_{\p}:I^{\infty}R_{\p} +x_iR_{\p}}\right)\cdot e(R/\p) \le c_i(I) \, .$$\end{proof}

In light of Proposition \ref{greater} we call $c_0^*(I),...,c_d^*(I)$ the {\it reduced multiplicity sequence} of $I$.
One may be tempted to ask whether $c_i^*(I) = c_i(I)$  for $0 \le i \le d$.
If this were true, it would follow directly that 
Theorem ~\ref{ID} and Theorem \ref{N} with  $n(\p) = e(R/\p)$ 
are equivalent. 
However, there are examples where $c_i^*(I) < c_i(I)$. 
We will construct such an example using St\"uckrad's and Vogel's approach to intersection theory (see \cite{FOV}), and we will explain how the multiplicity sequence and the reduced multiplicity sequence appear in the St\"uckrad-Vogel intersection algorithm.

Let $X, Y$ be equidimensional closed subschemes of $\P^n_k$,
where $k$ is an arbitrary field. In order to obtain a B\'ezout theorem for improper intersections, St\"uckrad and Vogel assigned an intersection cycle to $X \cap Y$ as follows.

Let $I_X$ and $I_Y$ denote the defining ideals of $X$ and $Y$ in $k[X_0, . . . ,X_n]$ and
$k[Y_0, . . . , Y_n]$, respectively. 
Let $k(u) := k(\{u_{i j} \, | \,  0\le i, j \le n\})$ be a purely transcendental field extension of $k$.
Consider the ring $R := k(u)[X_0, . . . , X_n, Y_0, . . . , Y_n]/(I_X, I_Y )$ and the ideal $I := (\{X_i-Y_i \, |\,0\le i \le n \})R$.
Define $x_i := \sum_{j=0}^n u_{ij}(X_j-Y_j)$, $0\le i \le n$.
Then the {\it intersection cycle} of $X$ and $Y$ is the sum of the cycles 
$$v_i := \sum_{\substack{\p\in V(I), \ {\rm ht}\, \p = i\\
\p \supset (x_1, \ldots, \, x_{i-1}):\, I^{\infty}}
}  \lambda \left(\frac{R_{\p}}{ (x_1, \ldots, x_{i-1})R_{\p}:I^{\infty}R_{\p} +x_iR_{\p}}\right) \cdot [\p]\, ,$$
where $[\p]$ denotes the cycle associated to $\p$.
By \cite[4.2]{AM97} we have
$$c_i(I) = \deg v_i = \sum_{\substack{\p\in V(I), \ {\rm ht}\, \p = i\\
\p \supset (x_1, \ldots, \, x_{i-1}):\, I^{\infty}}
}  \lambda \left(\frac{R_{\p}}{ (x_1, \ldots, x_{i-1})R_{\p}:I^{\infty}R_{\p} +x_iR_{\p}}\right) \cdot e(R/\p)\, .$$

An irreducible component $[\p]$ of the intersection cycle of $X$ and $Y$ is called $k$-{\it rational} if it is defined over $k$.  
By a result of van Gastel \cite[3.9]{Ga}, the $k$-rational irreducible components of the intersection cycle are the distinguished varieties
% of the intersection 
in Fulton's intersection theory \cite[p. 95]{Fu}. 
From the definition of $v_i$ above one sees that $[\p]$ is $k$-rational if and only if $\p \in L(I)$; for this and related results see \cite[2.2]{AM93}. 
Therefore, the proof of Proposition \ref{greater} shows that $c_i^*(I)$ is the degree of the part of the cycle $v_i$ 
that is supported at the $k$-rational components of codimension $i$.
%the $(d-i)$-dimensional $k$-rational components of $X \cap Y$. 

To construct an example with $c_i^*(I) < c_i(I)$ we only need to find an example where not all $(d-i)$-dimensional components of the intersection cycle are $k$-rational.

\begin{example} 
Let $X = Y$ be the curve in $\P_k^3$ given parametrically by $(s^6: s^4t^2: s^3t^3: t^6)$, where ${\rm char} (k) \neq 2,3$. 
It was shown in \cite[Example 2, p. 269]{SV} that the intersection cycle of $X$ and $Y$ has non $k$-rational components.
From the same reference it follows that $c_3^*(I) = 11$ and $c_3(I) = 18$, where $I$ is the ideal defined above.
\end{example}

With regard to Theorem \ref{N}, it is of interest to find a practical way to compute the invariants $c_i^*(I)$. For this reason we raise the following question.

\begin{problem} 
Does there exist a bivariate polynomial such that the invariants $c_i^*(I)$, $0 \leq i \leq d$, are the normalized coefficients of its leading homogeneous component?
\end{problem}

\noindent{\bf Acknowledgment.} 
The main results of this paper were obtained at the American Institute of Mathematics (AIM) in San Jose, California, while the authors participated in a SQuaRE. We are very appreciative of the hospitality offered by AIM and by the support of the National Science Foundation. 

\bs

\end{document}